\documentclass[preprint,12pt]{elsarticle1}

\usepackage{subfig}
\usepackage{graphicx}
\usepackage{caption,color}   
\usepackage{amssymb}
\usepackage{amsthm}
\usepackage{amsmath}
\usepackage{epic}
\usepackage{setspace}
\usepackage{float}
\usepackage{multirow}

\usepackage{hyperref}
\usepackage{xcolor}
\usepackage{marginnote}

\usepackage{amsthm}

\newtheorem{thm}{Theorem}[section]
\newtheorem{cor}[thm]{Corollary}

\newtheorem{prou}[thm]{Definition}
\newtheorem{rmk}[thm]{Remark}
\newtheorem{example}[thm]{Example}

\numberwithin{equation}{section}
\journal{}

\begin{document}
\begin{spacing}{1.15}
\begin{frontmatter}
\title{\textbf{ Quadratic form estimations for Hessian matrices of resistance distance and Kirchhoff index of positive-weighted graphs }}

\author[label1]{Yu Li} \ead{liyuzone@163.com}
\author[label1]{Lizhu Sun\corref{cor}}\ead{lizhusun@hrbeu.edu.cn}
\author[label1]{Changjiang Bu} \ead{buchangjiang@hrbeu.edu.cn}

\cortext[cor]{Corresponding author}

\address[label1]{College of Mathematical Sciences, Harbin Engineering University, Harbin, PR China}

\begin{abstract}

Let $G^{w}=(V,E,w)$ be a positive-weighted graph with the weight $w(e)>0$ for all $e\in E.$
 The weighted graph $G^{\widetilde{w}}=(V,E,\widetilde{w})$  is called a hyper-dual number weighted graph, where the  weight  $\widetilde{w}(e)=w(e)+\Delta w(e)(\varepsilon+\varepsilon^{*})$ is a hyper dual number, $\Delta w(e)$ is a real number,  $\varepsilon$ and $\varepsilon^{*}$ are two dual units, $e\in E$. In this paper, we  give a representation for the Moore-Penrose inverse of the Laplacian matrix, and calculation formulas for the resistance distance and Kirchhoff index of  $G^{\widetilde{w}}$, respectively.   We establish  quadratic forms of the Hessian matrices for the resistance distance and Kirchhoff index of
$G^{w}$ via generalized matrix  inverses. We further derive explicit bounds on the eigenvalues of the Hessian matrices for the resistance distance and the Kirchhoff index of
$G^{w}$  in terms of  graph parameters. We  also prove that the Kirchhoff index of a positive-weighted graph with bounded edge weights is strongly convex on its edge weight vector.

\end{abstract}

\begin{keyword} Hessian matrix; hyper-dual number weighted  graph; resistance distance; Kirchhoff index;  generalized inverse \\
\emph{AMS classification(2020):} 05C09; 05C12; 05C50; 15A10.
\end{keyword}

\end{frontmatter}

\section{Introduction}

All graphs considered in this paper are simple undirected \cite{harary1969graph}. For a connected graph $G$, when a resistance
 of 1 ohm is placed on each edge, the effective resistance between vertices $i$ and $j$ of $G$
is called the resistance distance between $i$ and $j$, denoted by $R_{ij}(G)$ \cite{foster1949average}. The sum of the resistance distances over all pairs of vertices is called the Kirchhoff index of $G$, denoted by $Kf(G)$ \cite{chen2008resistance}. In 1993, Klein and Randi\'{c} gave formulas for the resistance distance and Kirchhoff index of a connected graph $G$ by using the Moore-Penrose inverse of its Laplacian matrix \cite{Klein1993Resistance}.
Let $G^{w}$ be a connected positive-weighted graph, where $w(e)>0$ is the weight of its each edge $e$. 
In 2004, Bapat gave formulas for the resistance distance and Kirchhoff index of $G^{w}$ \cite{bapat2004resistance}. The resistance distance and Kirchhoff index have been applied in various research, including graph spanning trees \cite{cheng2022counting,li2019enumeration}, random walks \cite{lovasz1993random,cloninger2024random}, network analysis \cite{zhang2019detecting}, chemical graph theory \cite{bonchev1994molecular}, and robust network design \cite{yang2018designing,ellens2011effective}.

Let $\textbf{y}=(y_{1},y_{2},\cdots,y_{m})^{\top}$, where  $y_{i}$ and $\frac{1}{y_{i}}$ are the resistance  and   weight of the edge  $e_{i}$ in a positive-weighted graph $G^{w}$, $i=1,2,\cdots,m.$ In 1956, Shannon and Hagelberger proved that the resistance distance of $G^{w}$ is a concave function of the resistance vector $\textbf{y}$ \cite{shannon1993concavity}. In 2008, Ghosh, Boyd, and Saberi showed that the resistance distance of $G^{w}$ is a convex function of the edge weight vector $\textbf{x}=(\frac{1}{y_{1}},\frac{1}{y_{2}},\cdots,\frac{1}{y_{m}})^{\top}$ \cite{ghosh2008minimizing}, but this conclusion does not follow directly from the result in \cite{shannon1993concavity}. Furthermore, Ref. \cite{ghosh2008minimizing} gave the quadratic forms of the Hessian matrix for the Kirchhoff index of $G^{w}$ via the  matrix inverses, and proved that the Kirchhoff index of $G^{w}$ is a strictly convex function of $\textbf{x}$.  By using dual numbers, Ref. \cite{li2025resistance} gave the error estimates for the resistance distance and Kirchhoff index of a graph.

 Let $\mathbb{R}^{m}$ denote the set of $m$-dimensional vectors over the real number field $\mathbb{R}$ and $\mathbf{x}=(x_{1},x_{2},\cdots,x_{m})^{\top}\in \mathbb{R}^{m}.$ Let $f(\textbf{x})$ be a real-valued function on $\mathbb{R}^{m}$ with continuous second order partial derivatives.
The matrix $$\nabla^{2} f(\textbf{x})=\left(\frac{\partial^{2} f}{\partial x_{i}\partial x_{j}}\right)_{m \times m}$$ is called the Hessian matrix of
$f(\mathbf{x})$. The quadratic form and eigenvalues of a Hessian matrix of $f(\mathbf{x})$ find broad applications in neural network optimization \cite{chen2021fast}, algebraic graph theory \cite{yazawa2021eigenvalues} and principal component analysis of graphs \cite{pan2019principal}.

The hyper-dual numbers are a powerful tool for obtaining the quadratic form of the Hessian matrix of a function. In this paper, we give the Moore-Penrose inverse of the Laplacian matrix for a connected hyper-dual number weighted graph, and derive explicit formulas for its resistance distance and Kirchhoff index.
By using these results, we give the quadratic forms of the Hessian matrices for the resistance distance and Kirchhoff index of a positive-weighted graph $G^{w}$ via  the Moore-Penrose inverse of the Laplacian matrix of $G^{w}$. We also establish explicit bounds on these quadratic forms, which are expressed  in terms of the largest eigenvalue of the Laplacian matrix, algebraic connectivity,  biharmonic distance of $G^{w}$, and maximum degree of the underlying  graph $G$. We further prove that the Kirchhoff index of a positive-weighted graph
$G^{w}$  with bounded edge weights  is strongly convex on its edge weight vector.

\section{Preliminaries}
\subsection{Hyper-dual numbers}
In 1873, Clifford introduced the dual number \cite{clifford1871preliminary}
$$\widehat{a}=a+a_{1}\varepsilon,$$
where $\varepsilon$ is the dual unit satisfying $\varepsilon\neq0, \varepsilon^{2}=0,$ and $a,a_{1}$ are real numbers. Up to now, dual numbers have been widely applied in mechanics and kinematics \cite{Geometry1010,fischer}, robotics \cite{gu1987dual}, formation control \cite{qi2024eigenvalues}, brain science \cite{wei2024singular} and Markov chains \cite{qi2024dualww} and so on.
In 2011, Fike and Alonso proposed the hyper-dual number \cite{fike2011development}
$$\widetilde{a}=a+a_{1}\varepsilon+a_{2}\varepsilon^{*}
 +a_{3}\varepsilon\varepsilon^{*},$$
 where $\varepsilon,\varepsilon^{*}$ are two dual units satisfying $\varepsilon\varepsilon^{*}=\varepsilon^{*}\varepsilon\neq 0, $ $ \varepsilon^{2}=(\varepsilon^{*})^{2}
=0,$ and $a,a_{1},a_{2},a_{3}$ are real numbers. Hyper-dual numbers are widely utilized in automatic differentiation, multibody kinematics and optimization problems \cite{fike2012automatic,murai2022method,cohen2016application}.

Let $\mathbb{H}^{m}$ be the set of
$m$-dimensional vectors over the hyper-dual number set $\mathbb{H}$. Let $\widetilde{\textbf{x}}=\textbf{x}+\textbf{x}_{1}\varepsilon+\textbf{x}_{2}\varepsilon^{*}+\textbf{x}_{3}\varepsilon\varepsilon^{*}\in \mathbb{H}^{m},$ where $\textbf{x},\textbf{x}_{1},\textbf{x}_{2},\textbf{x}_{3}\in\mathbb{R}^{m}$.
Let $f(\widetilde{\textbf{x}})=f(\textbf{x}+\textbf{x}_{1}\varepsilon+ \textbf{x}_{2}\varepsilon^{*}+ \textbf{x}_{3}\varepsilon\varepsilon^{*})$ be the hyper-dual function of
$\widetilde{\textbf{x}}$. Let
 $\mathcal{S}_{\varepsilon\varepsilon^{*}}[f(\widetilde{\mathbf{x}})]$
denote the real coefficient of $\varepsilon\varepsilon^{*}$ in $f(\widetilde{\mathbf{x}})$.
By using hyper-dual numbers, Refs. \cite{fike2011development,tanaka2015highly} gave a formula for calculating  the quadratic form  $(\Delta \textbf{x})^{\top}\nabla^{2}f(\textbf{x})\Delta \textbf{x}$ of the Hessian matrix of a  function $f(\textbf{x})$ given by
\begin{align}
\label{jiq2}
(\Delta \textbf{x})^{\top}\nabla^{2}f(\textbf{x})\Delta \textbf{x}&=\mathcal{S}_{\varepsilon\varepsilon^{*}}[f(\textbf{x} +\Delta \textbf{x}(\varepsilon+\varepsilon^{*}))],
\end{align}
where $\Delta \textbf{x}\in\mathbb{R}^{m}.$
\subsection{Hyper-dual number weighted graphs}
A hyper-dual number matrix is a matrix whose entries are hyper-dual numbers. Let $\mathbb{R}^{m\times n}$ and $\mathbb{H}^{m\times n}$ denote the sets of all real matrices and hyper-dual number matrices of order $m\times n$, respectively.
Write $[n]=\{1,2,\cdots,n\}$. Next, we define the hyper-dual  number weighted graph.

\begin{prou}
Let $G=(V,E)$ be a graph with the vertex set $V$ and edge set
$E=\{e_{1},e_{2},\cdots,e_{m}\}$. Let the hyper-dual number weight of  edge $e_{i}$ be $$\widetilde{w}(e_{i})=\widetilde{x}_{i}=x_{i}+\Delta x_{i}(\varepsilon+\varepsilon^{*}),
$$ where  $x_{i}=w(e_{i})>0$ is the weight of edge $e_{i}$ in the  positive-weighted graph $G^{w}=(V,E,w)$, and  $\Delta x_{i}= \Delta w(e_{i})$ is the real number satisfying  $x_{i}+\Delta x_{i}>0$, for all $i\in[m]$.  We call $G^{\widetilde{w}}=(V,E,\widetilde{w})$ the hyper-dual number weighted graph.
\end{prou}
The vector  $$\widetilde{\textbf{x}}=(\widetilde{x}_{1},\widetilde{x}_{2},
\cdots,\widetilde{x}_{m})^{\top}=\textbf{x}+\Delta\textbf{x}
(\varepsilon+\varepsilon^{*})$$ is called the hyper-dual edge weight vector of
$G^{\widetilde{w}}$, where $\mathbf{x}=(x_{1},x_{2},\cdots,x_{m})^{\top}$ is the edge weight vector of
$G^{w}$, $\Delta \mathbf{x}=(\Delta x_{1}, \Delta x_{2},\cdots, \Delta x_{m})^{\top}$. For a connected  positive-weighted graph $G^{w}$,  its resistance distance $R_{ij}(G^{w})$ between vertices  $i$ and $j$ and  Kirchhoff index $Kf(G^{w})$ are denoted by  $R_{ij}(\textbf{x})$ and $Kf_{G}(\textbf{x})$, respectively.

Let  $A=(a_{ij})$ be  the adjacency matrix of  $G^{w}$, where $$a_{ij}
= \begin{cases}
w(e), & e=\{i,j\}\in E, \\
0, & \textrm{otherwise}.
\end{cases}$$
We call $\widetilde{A}=(\widetilde{a}_{ij})
=A+A_{1}(\varepsilon+\varepsilon^{*})$
 the adjacency matrix of $G^{\widetilde{w}}$, where $A_{1}=(\Delta a_{ij})\in \mathbb{R}^{n\times n}$. Then
\begin{equation}\label{chaoduiou}
    \begin{aligned}
\widetilde{a}_{ij}
&= \begin{cases}
a_{ij}+
\Delta a_{ij}(\varepsilon+\varepsilon^{*}), & e=\{i,j\}\in E, \\
0, & \textrm{otherwise}
\end{cases}
\\&= \begin{cases}
w(e)+
\Delta w(e)(\varepsilon+\varepsilon^{*}), & e=\{i,j\}\in E, \\
0, & \textrm{otherwise}.
\end{cases}
 \end{aligned}
\end{equation}

The hyper-dual number matrix \begin{align}\label{D1L} \widetilde{D}=D+D_{1}(\varepsilon+\varepsilon^{*})=\textrm{diag}(\widetilde{d}_{1},
\widetilde{d}_{2},\cdots,\widetilde{d}_{n})\end{align} is called  the degree matrix of $G^{\widetilde{w}}$, where $D=\textrm{diag}(d_{1},d_{2},\cdots,d_{n})$ is the degree matrix of  $G^{w}$, $D_{1}=\textrm{diag}(\Delta d_{1},\Delta d_{2}
 ,\cdots,\Delta d_{n})$, $d_{i}$ is the weighted degree of vertex  $i$ in $G^{w}$ and  $\Delta d_{i}=\sum\limits_{j=1}^{n}\Delta a_{ij}$, for all $i\in[n]$.
We call
\begin{equation}
    \begin{aligned}\label{lapu1}
\widetilde{L}=\widetilde{D}-\widetilde{A}
=L+L_{1}(\varepsilon+\varepsilon^{*})
 \end{aligned}
\end{equation}
 the Laplacian matrix of $G^{\widetilde{w}}$, where $L=D-A$  is the Laplacian matrix of  $G^{w}$, $L_{1}=D_{1}-A_{1}\in \mathbb{R}^{n\times n}$.

For a connected hyper-dual number weighted graph $G^{\widetilde{w}}$, the resistance distance $R_{ij}(G^{\widetilde{w}})$ between any two vertices $i,j$ and  Kirchhoff index $Kf(G^{\widetilde{w}})$ of  $G^{\widetilde{w}}$ are denoted by
 functions $R_{ij}(\widetilde{\textbf{x}})$ and
$Kf_{G}(\widetilde{\textbf{x}})$ of the hyper-dual edge weight vector $\widetilde{\textbf{x}}$, respectively.

\subsection{The Moore-Penrose inverse of hyper-dual number matrices}
Let $I$ be the identity matrix of appropriate order. Let $\widetilde{A}\in \mathbb{H}^{n\times n}$.  If there exists $\widetilde{X}\in\mathbb{H}^{n\times n}$ such that $\widetilde{A}\widetilde{X}=\widetilde{X}\widetilde{A}=I$, then $\widetilde{A}$ is invertible and $\widetilde{X}$ is called the inverse of $\widetilde{A}$, denoted by $\widetilde{A}^{-1}$. If there exists  $\widetilde{X}\in\mathbb{H}^{n\times m}$  satisfying
\begin{align}\label{mp2}\widetilde{A}\widetilde{X}\widetilde{A}=\widetilde{A}, \ \ \widetilde{X}\widetilde{A}\widetilde{X}
=\widetilde{X},\ \  (\widetilde{A}\widetilde{X})^{\top}
=\widetilde{A}\widetilde{X} , \ \ (\widetilde{X}\widetilde{A})^{\top}
=\widetilde{X}\widetilde{A}, \end{align}
then $\widetilde{X}$ is called the Moore-Penrose inverse of $\widetilde{A}$, denoted by $\widetilde{A}^{\dag}.$
 If the Moore-Penrose inverse of a hyper-dual number matrix exists, it is unique \cite{xiao2024c}.
In order to derive  the resistance distance and Kirchhoff index of a hyper-dual number weighted graph, we next give the existence and explicit representation of solutions to hyper-dual linear equations.

\begin{thm}\label{3.3}
Let $\widetilde{A}\in\mathbb{H}^{m\times n}$, $ \widetilde{\mathbf{x}}\in\mathbb{H}^{n}$ and $\widetilde{\mathbf{b}}\in\mathbb{H}^{m}$. Suppose that $\widetilde{A}^{\dag}$ exists. Then the following statements hold.

$(a)$ The solution  to the hyper-dual  equation
$\widetilde{A}\widetilde{\mathbf{x}}=\widetilde{\mathbf{b}}$ exists  if and only if $$\widetilde{A}\widetilde{A}^{\dag}
\widetilde{\mathbf{b}}=\widetilde{\mathbf{b}}.$$

$(b)$  If the solution to the hyper-dual equation $\widetilde{A}\widetilde{\mathbf{x}}=\widetilde{\mathbf{b}}$ exists, then its general solution is given by \begin{align}\label{eq91} \widetilde{\mathbf{x}}=\widetilde{A}^{\dag}\widetilde{\mathbf{b}}
+(I-\widetilde{A}^{\dag}\widetilde{A})\widetilde{\mathbf{u}},
\end{align} where $\widetilde{\mathbf{u}}\in \mathbb{H}^{n}$ is arbitrary.
\end{thm}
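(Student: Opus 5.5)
The argument is the standard one for real matrices. It uses only the four Penrose equations in (\ref{mp2}), together with the fact that hyper-dual numbers form a commutative associative ring with identity, so matrix multiplication over $\mathbb{H}$ is associative and distributive. No division is needed and no structure specific to $\varepsilon,\varepsilon^{*}$ is used; the only thing to watch is that we never invert a hyper-dual scalar, since nonzero hyper-dual numbers with zero real part are zero divisors.

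For part $(a)$, sufficiency is immediate. If $\widetilde{A}\widetilde{A}^{\dag}\widetilde{\mathbf{b}}=\widetilde{\mathbf{b}}$, then $\widetilde{\mathbf{x}}_{0}=\widetilde{A}^{\dag}\widetilde{\mathbf{b}}\in\mathbb{H}^{n}$ satisfies $\widetilde{A}\widetilde{\mathbf{x}}_{0}=\widetilde{\mathbf{b}}$. For necessity, suppose some $\widetilde{\mathbf{x}}$ satisfies $\widetilde{A}\widetilde{\mathbf{x}}=\widetilde{\mathbf{b}}$. Using the first Penrose equation $\widetilde{A}\widetilde{A}^{\dag}\widetilde{A}=\widetilde{A}$ and associativity,
\[
\widetilde{A}\widetilde{A}^{\dag}\widetilde{\mathbf{b}}=\widetilde{A}\widetilde{A}^{\dag}\widetilde{A}\widetilde{\mathbf{x}}=\widetilde{A}\widetilde{\mathbf{x}}=\widetilde{\mathbf{b}}.
\]

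For part $(b)$ there are two inclusions. First, every vector of the form (\ref{eq91}) is a solution. Compute
\[
\widetilde{A}\widetilde{A}^{\dag}\widetilde{\mathbf{b}}+(\widetilde{A}-\widetilde{A}\widetilde{A}^{\dag}\widetilde{A})\widetilde{\mathbf{u}}.
\]
The first term equals $\widetilde{\mathbf{b}}$ by part $(a)$, since a solution is assumed to exist. The second term vanishes by $\widetilde{A}\widetilde{A}^{\dag}\widetilde{A}=\widetilde{A}$.

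Conversely, let $\widetilde{\mathbf{x}}$ be any solution and choose $\widetilde{\mathbf{u}}=\widetilde{\mathbf{x}}$. Then
\[
\widetilde{A}^{\dag}\widetilde{\mathbf{b}}+(I-\widetilde{A}^{\dag}\widetilde{A})\widetilde{\mathbf{x}}=\widetilde{A}^{\dag}\widetilde{\mathbf{b}}+\widetilde{\mathbf{x}}-\widetilde{A}^{\dag}\widetilde{\mathbf{b}}=\widetilde{\mathbf{x}},
\]
where we used $\widetilde{A}\widetilde{\mathbf{x}}=\widetilde{\mathbf{b}}$. So every solution is captured by (\ref{eq91}).

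There is no real obstacle here. The one point worth stating explicitly is that $\mathbb{H}$ is a commutative ring, which justifies the associativity and distributivity used throughout. This follows from the defining relations $\varepsilon^{2}=(\varepsilon^{*})^{2}=0$ and $\varepsilon\varepsilon^{*}=\varepsilon^{*}\varepsilon$. Only the identity $\widetilde{A}\widetilde{A}^{\dag}\widetilde{A}=\widetilde{A}$ is actually needed, so any $\{1\}$-inverse would suffice; the assumed existence of $\widetilde{A}^{\dag}$ just guarantees that such an inverse is available.
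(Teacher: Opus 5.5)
Your proof is correct and follows the paper's argument essentially step for step. Both use the first Penrose equation $\widetilde{A}\widetilde{A}^{\dag}\widetilde{A}=\widetilde{A}$ for necessity in $(a)$, and $\widetilde{A}^{\dag}\widetilde{\mathbf{b}}$ as the witness for sufficiency. In $(b)$ both verify the formula directly and recover an arbitrary solution by taking $\widetilde{\mathbf{u}}=\widetilde{\mathbf{x}}$, which is the paper's identity $\widetilde{\mathbf{x}}=\widetilde{\mathbf{x}}+\widetilde{A}^{\dag}(\widetilde{\mathbf{b}}-\widetilde{A}\widetilde{\mathbf{x}})$. Your extra remarks are accurate and slightly sharpen the statement: only a $\{1\}$-inverse is needed, and no hyper-dual scalar is ever inverted.
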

\begin{proof}
$(a)$ Suppose $\widetilde{\mathbf{x}}$ is a solution to $\widetilde{A}
\widetilde{\mathbf{x}}=\widetilde{\mathbf{b}}$. Then
$$\widetilde{\mathbf{b}}=\widetilde{A}
\widetilde{\mathbf{x}}=(\widetilde{A}\widetilde{A}^{\dag})
\widetilde{A}
\widetilde{\mathbf{x}}=
\widetilde{A}\widetilde{A}^{\dag}
\widetilde{\mathbf{b}}.$$ Conversely, if
$\widetilde{A}\widetilde{A}^{\dag}
\widetilde{\mathbf{b}}
=\widetilde{\mathbf{b}}$, it is clear that $\widetilde{A}^{\dag}\widetilde{\mathbf{b}}$ is a solution to $\widetilde{A}
\widetilde{\mathbf{x}}
=\widetilde{\mathbf{b}}$.

 $(b)$  From $(a)$, we have $\widetilde{A}\widetilde{A}^{\dag}
 \widetilde{\mathbf{b}}=\widetilde{\mathbf{b}}$. For any $\widetilde{\mathbf{u}}\in \mathbb{H}^{m}$,   $$
\widetilde{A}(\widetilde{A}^{\dag}
\widetilde{\mathbf{b}}+
(I-\widetilde{A}^{\dag}\widetilde{A})
\widetilde{\mathbf{u}})
=\widetilde{A}\widetilde{A}^{\dag}\widetilde{\mathbf{b}}
=\widetilde{\mathbf{b}}.$$ Hence, $\widetilde{\mathbf{x}}=\widetilde{A}^{\dag}\widetilde{\mathbf{b}}
+(I-\widetilde{A}^{\dag}\widetilde{A})
\widetilde{\mathbf{u}}$ is a solution to $\widetilde{A}
\widetilde{\mathbf{x}}=\widetilde{\mathbf{b}}$.

On the other hand, let
$\widetilde{\mathbf{x}}$ be a solution to $\widetilde{A}
\widetilde{\mathbf{x}}=\widetilde{\mathbf{b}}$.
 Then
\begin{align}\nonumber \widetilde{\mathbf{x}}&=\widetilde{\mathbf{x}}
+\widetilde{A}^{\dag}(\widetilde{\mathbf{b}}-
\widetilde{A}\widetilde{\mathbf{x}})\\ \nonumber
&=\widetilde{A}^{\dag}\widetilde{\mathbf{b}}+
(I-\widetilde{A}^{\dag}\widetilde{A})\widetilde{\mathbf{x}}.
\end{align}
Therefore, every solution to $\widetilde{A}\widetilde{\mathbf{x}}
=\widetilde{\mathbf{b}}$ can be written in the form Eq. \eqref{eq91}.
\end{proof}

\section{Main result}
\subsection{Resistance distance  and  Kirchhoff index of hyper-dual number  weighted graphs}
In this paper, we use $\textbf{1}$,
$\textbf{1}_{i}$ and $J$ to denote
the all-ones vector, the standard unit vector  of appropriate dimension with  1 at the $i$-th position and 0 elsewhere, and the all-ones square matrix of appropriate order, respectively.
For a matrix $P\in \mathbb{H}^{n\times n}$, we write $P^{(i,j)}=(P)_{ii}+(P)_{jj}-(P)_{ij}-(P)_{ji}.$

This section establishes a representation  for the Moore-Penrose inverse of the Laplacian matrix of a connected hyper-dual number weighted graph $G^{\widetilde{w}}$, and then  gives explicit  calculation formulas
 for the resistance distance and Kirchhoff index of $G^{\widetilde{w}}$ by means of this generalized inverse.
\begin{thm}\label{5.1}Let $\widetilde{L}=
L+L_{1}(\varepsilon+\varepsilon^{*})$ given in Eq. \eqref{lapu1} be the Laplacian matrix of a connected hyper-dual number weighted graph $G^{\widetilde{w}}$  with
$n$ vertices. Then

$(a)$ The Moore-Penrose inverse of $\widetilde{L}$ exists and \begin{align}\label{eq1}
\widetilde{L}^{\dag}=L^{\dag}-
L^{\dag}L_{1}L^{\dag}(\varepsilon+\varepsilon^{*})
+2L^{\dag}L_{1}L^{\dag}L_{1}L^{\dag}
\varepsilon\varepsilon^{*}.
\end{align}

$(b)$
 \begin{align}\nonumber
R_{ij}(G^{\widetilde{w}})
=&(L^{\dag})^{(i,j)}-
(L^{\dag}L_{1}L^{\dag})^{(i,j)}(\varepsilon
+\varepsilon^{*})
+2(L^{\dag}L_{1}L^{\dag}L_{1}L^{\dag})^{(i,j)}\varepsilon\varepsilon^{*},
\end{align}
where $i,j\in[n]$.

$(c)$
\begin{align}\label{jia}
Kf(G^{\widetilde{w}})
=n\textrm{tr}(L^{\dag})
-n\textrm{tr}((L^{\dag})^{2}L_{1})
(\varepsilon+\varepsilon^{*})
+2n\textrm{tr}((L^{\dag})^{2}L_{1}L^{\dag}L_{1}\varepsilon\varepsilon^{*}.
\end{align}
\end{thm}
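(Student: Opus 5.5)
For part $(a)$, I would write $\widetilde{X}=L^{\dag}-L^{\dag}L_{1}L^{\dag}(\varepsilon+\varepsilon^{*})+2L^{\dag}L_{1}L^{\dag}L_{1}L^{\dag}\varepsilon\varepsilon^{*}$ and check the four Penrose equations \eqref{mp2} directly. Uniqueness of the hyper-dual Moore-Penrose inverse \cite{xiao2024c} then identifies $\widetilde{X}$ with $\widetilde{L}^{\dag}$. In the products I would use $\varepsilon^{2}=(\varepsilon^{*})^{2}=0$, so $(\varepsilon+\varepsilon^{*})^{2}=2\varepsilon\varepsilon^{*}$, and I would drop every term of total degree at least three in the dual units.

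The real structural input is the following. $G$ is connected, so $LL^{\dag}=L^{\dag}L=I-\frac1nJ$. Also $L_{1}$ is a Laplacian-type matrix, since its rows sum to zero and it is symmetric, so $L_{1}\mathbf{1}=0$ and $\mathbf{1}^{\top}L_{1}=0$. Hence $JL_{1}=L_{1}J=0$, and therefore
\[
LL^{\dag}L_{1}=L_{1}=L_{1}L^{\dag}L,\qquad L^{\dag}J=JL^{\dag}=0 .
\]
With these identities I would expand $\widetilde{L}\widetilde{X}$ coefficient by coefficient:
\begin{itemize}
\item the real part is $LL^{\dag}=I-\frac1nJ$;
\item the coefficient of $\varepsilon+\varepsilon^{*}$ is $L_{1}L^{\dag}-LL^{\dag}L_{1}L^{\dag}=0$;
\item the coefficient of $\varepsilon\varepsilon^{*}$ is $2LL^{\dag}L_{1}L^{\dag}L_{1}L^{\dag}-2L_{1}L^{\dag}L_{1}L^{\dag}=0$.
\end{itemize}
So $\widetilde{L}\widetilde{X}=I-\frac1nJ$, and by the symmetric computation $\widetilde{X}\widetilde{L}=I-\frac1nJ$ as well. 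Both products are therefore symmetric. Then $\widetilde{L}\widetilde{X}\widetilde{L}=(I-\frac1nJ)\widetilde{L}=\widetilde{L}$ because $J\widetilde{L}=0$, and $\widetilde{X}\widetilde{L}\widetilde{X}=(I-\frac1nJ)\widetilde{X}=\widetilde{X}$ because $J\widetilde{X}=0$.

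The step I expect to need the most care is this bookkeeping of the $\varepsilon\varepsilon^{*}$ coefficient. The factor $2$ has to appear exactly because of $(\varepsilon+\varepsilon^{*})^{2}=2\varepsilon\varepsilon^{*}$, and the cancellation relies on $L_{1}\mathbf{1}=0$.

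For parts $(b)$ and $(c)$, I would first justify that the real-weighted formulas carry over: $R_{ij}=(\widetilde{L}^{\dag})^{(i,j)}$ and $Kf=n\,\mathrm{tr}(\widetilde{L}^{\dag})$. To do this I would mimic Bapat's argument. Solve $\widetilde{L}\widetilde{\mathbf{v}}=\mathbf{1}_{i}-\mathbf{1}_{j}$ using Theorem \ref{3.3}. Solvability holds because $(I-\frac1nJ)(\mathbf{1}_{i}-\mathbf{1}_{j})=\mathbf{1}_{i}-\mathbf{1}_{j}$. The general solution differs from $\widetilde{L}^{\dag}(\mathbf{1}_{i}-\mathbf{1}_{j})$ only by multiples of $\mathbf{1}$, so the potential difference is
\[
(\mathbf{1}_{i}-\mathbf{1}_{j})^{\top}\widetilde{L}^{\dag}(\mathbf{1}_{i}-\mathbf{1}_{j})=(\widetilde{L}^{\dag})^{(i,j)}.
\]
Alternatively, I would take this as the definition consistent with the real case. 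Substituting \eqref{eq1} and using linearity of $P\mapsto P^{(i,j)}$ then gives $(b)$.

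For $(c)$, I would sum over pairs $i<j$. This gives $Kf=n\,\mathrm{tr}(\widetilde{L}^{\dag})-\mathbf{1}^{\top}\widetilde{L}^{\dag}\mathbf{1}$, and the last term vanishes because $J\widetilde{L}^{\dag}=0$. Finally I would apply trace cyclicity:
\[
\mathrm{tr}(L^{\dag}L_{1}L^{\dag})=\mathrm{tr}((L^{\dag})^{2}L_{1}),\qquad \mathrm{tr}(L^{\dag}L_{1}L^{\dag}L_{1}L^{\dag})=\mathrm{tr}((L^{\dag})^{2}L_{1}L^{\dag}L_{1}),
\]
which yields \eqref{jia}.
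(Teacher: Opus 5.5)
Your proposal is correct and follows essentially the same route as the paper. You verify the Penrose equations for the candidate $\widetilde{X}$ using $LL^{\dag}=L^{\dag}L=I-\frac{1}{n}J$ and $JL_{1}=L_{1}J=0$; the paper computes $\widetilde{X}\widetilde{L}$ first and gets $I-L^{\dag}L=\frac{1}{n}J$ from the spectral decomposition. You then obtain $R_{ij}=(\widetilde{L}^{\dag})^{(i,j)}$ from the general solution in Theorem \ref{3.3} and $Kf=n\,\mathrm{tr}(\widetilde{L}^{\dag})$ by summing over pairs. Your choice of unit current is a small improvement: it avoids dividing by the hyper-dual net current $Y$, which the paper does without checking that $Y$ is invertible.
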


\begin{proof}
$(a)$ Let $\widetilde{X}=L^{\dag}-
L^{\dag}L_{1}L^{\dag}(\varepsilon+\varepsilon^{*})
+2L^{\dag}L_{1}L^{\dag}L_{1}L^{\dag}
\varepsilon\varepsilon^{*}.$  We now prove that $\widetilde{X}$ is the Moore-Penrose inverse of $\widehat{L}$.
It follows that \begin{align}\nonumber
\widetilde{X}\widetilde{L}&
=(L^{\dag}-
L^{\dag}L_{1}L^{\dag}(\varepsilon+\varepsilon^{*})
+2L^{\dag}L_{1}L^{\dag}L_{1}L^{\dag}
\varepsilon\varepsilon^{*})
(L+L_{1}(\varepsilon+\varepsilon^{*}))
\\ \nonumber
&=L^{\dag}L-
L^{\dag}L_{1}L^{\dag}L(\varepsilon+\varepsilon^{*})+
2L^{\dag}L_{1}L^{\dag}L_{1}L^{\dag}L
\varepsilon\varepsilon^{*}+L^{\dag}L_{1}(\varepsilon+\varepsilon^{*})-
2L^{\dag}L_{1}L^{\dag}L_{1}\varepsilon\varepsilon^{*}\\ \label{mooreP}
&=L^{\dag}L+L^{\dag}L_{1}(I-L^{\dag}L)(\varepsilon+\varepsilon^{*})
-2L^{\dag}L_{1}L^{\dag}L_{1}(I-L^{\dag}L)\varepsilon\varepsilon^{*}.
\end{align}
 Since the  positive-weighted graph
$G^{w}$ is connected, its Laplacian matrix $L$ is positive semidefinite with rank $n-1$\cite[p.~50]{bapat2010graphs}.
 Let $\lambda_{1}\geq\lambda_{2}
 \geq\cdots\geq\lambda_{n-1}>\lambda_{n}=0$  be the eigenvalues of $L$, and let $p_{i}$ be the unit eigenvector corresponding to $\lambda_{i}$ such that $p_{i}$ and $p_{j}$ are mutually orthogonal for $i,j\in[n]$, $i\neq j$.
 Since the all-ones vector is an eigenvector of $L$ corresponding to $\lambda_{n}=0$,  we choose $p_{n}=\frac{1}{\sqrt{n}}\mathbf{1}$. Then \begin{align}\label{pfj}
  L=\sum_{i=1}^{n}\lambda_{i}p_{i}p_{i}^{\top}
  =\sum_{i=1}^{n-1}\lambda_{i}p_{i}p_{i}^{\top},\end{align}
  and the Moore-Penrose inverse of $L$ is \begin{align}\label{pfj1}L^{\dag}=
\sum_{i=1}^{n-1}\frac{1}{\lambda_{i}}p_{i}p_{i}^{\top}.\end{align}
From Eqs. \eqref{pfj} and \eqref{pfj1}, we obtian
   \begin{align}\nonumber I-L^{\dag}L&=I-\sum_{i=1}^{n-1}\frac{1}{\lambda_{i}}p_{i}p_{i}^{\top}
   \sum_{i=1}^{n-1}
   \lambda_{i}p_{i}p_{i}^{\top}=
I-\sum_{i=1}^{n-1}p_{i}p_{i}^{\top}\\ \nonumber
&=\sum_{i=1}^{n}p_{i}p_{i}^{\top}-
\sum_{i=1}^{n-1}p_{i}p_{i}^{\top}=p_{n}p_{n}^{\top}=
\frac{1}{\sqrt{n}\sqrt{n}}\textbf{1}\textbf{1}^{\top}=\frac{1}{n}
J.\end{align}
Substituting the above equality  into Eq. \eqref{mooreP} gives $
\widetilde{X}\widetilde{L}=I-\frac{1}{n}
J+\frac{1}{n}L^{\dag}L_{1}J(\varepsilon+\varepsilon^{*})
-\frac{2}{n}L^{\dag}L_{1}
L^{\dag}L_{1}J\varepsilon\varepsilon^{*}.$ Since  the row sums of $L_{1}$ are zero,  it follows that\begin{align}\label{npd}
\widetilde{X}\widetilde{L}=I-\frac{1}{n}J.
\end{align}
Similarly, $\widetilde{L}\widetilde{X}=I-\frac{1}{n}J.$
Since both $\widetilde{L}\widetilde{X}$ and $\widetilde{X}
\widetilde{L}$
  are symmetric,  $\widetilde{X}$ satisfies the third and fourth equations in Eq. \eqref{mp2}.

  Multiplying both sides of Eq. \eqref{npd} from the left by $\widetilde{L}$  gives $$\widetilde{L}\widetilde{X}\widetilde{L}=\widetilde{L}(I-\frac{1}{n}J)=\widetilde{L}-
\frac{1}{n}\widetilde{L}J=\widetilde{L}.$$
Thus, $\widetilde{X}$  satisfies the first equation in Eq. \eqref{mp2}.

Multiplying both sides of Eq.~\eqref{npd} from the left by $\widetilde{X}$ yields
\begin{align}\label{xlx}\widetilde{X}\widetilde{L}\widetilde{X}=(I-\frac{1}{n}J)\widetilde{X}=\widetilde{X}-
\frac{1}{n}J\widetilde{X}.\end{align}
Since $L$ is symmetric, it follows that $LL^{\dag}
=(LL^{\dag})^{\top}
=(L^{\dag})^{\top}L^{\top}
=L^{\dag}L.$
Then $JL^{\dag}=
JL^{\dag}LL^{\dag}
=JL(L^{\dag})^{2}=0.$ Hence, $J
\widetilde{X}=JL^{\dag}-
JL^{\dag}L_{1}L^{\dag}(\varepsilon+\varepsilon^{*})
+2JL^{\dag}L_{1}L^{\dag}L_{1}L^{\dag}
\varepsilon\varepsilon^{*}=0$.
Again by Eq. \eqref{xlx}, we have
$\widetilde{X}\widetilde{L}\widetilde{X}=
\widetilde{X},$
which  shows that $\widetilde{X}$ satisfies the second equation in Eq. \eqref{mp2}.
Therefore,
$\widetilde{X}$  is the Moore-Penrose inverse of $\widetilde{L}$, i.e.,
$\widetilde{L}^{\dag}=L^{\dag}-
L^{\dag}L_{1}L^{\dag}(\varepsilon+\varepsilon^{*})
+2L^{\dag}L_{1}L^{\dag}L_{1}L^{\dag}
\varepsilon\varepsilon^{*}.$

$(b)$
If $i=j$, clearly $R_{ij}(G^{\widetilde{w}})=0$. Assume  that
 $i\neq j$, $i,j\in[n]$ and  a voltage is applied  between vertices $i$ and $j$ of $G^{\widetilde{w}}$. Let  $Y\in \mathbb{H}$ denote the net current flowing from vertices $i$ to $j$ of $G^{\widetilde{w}}$. Let $\textbf{v}=(v_{1},v_{2},\cdots,v_{n})^{T}\in \mathbb{H}^{n}$ be the electric potential vector, where $v_{s}$ is the potential at the vertex $s$ of $G^{\widetilde{w}}$, for $s\in[n]$. Let $y_{st}$ denote the current flowing from a vertex $s$ to its adjacent vertex $t$ $(y_{st}=-y_{ts})$. Then
\begin{align}\label{lmt}
\sum_{s\sim t}y_{st}=Y(\textbf{1}_{i}-\textbf{1}_{j})_{s}=\begin{cases}
Y, & s=i,\\
-Y, & s=j,\\
0,  & s\neq i,j,
\end{cases}
\end{align}
where $s\sim t$ denote vertices $s$ and $t$ are adjacent.

Let $\Omega_{st}=\frac{1}{\widetilde{a}_{st}}$ be
 the resistance on edge $e=\{s,t\}$ in
 $G^{\widetilde{w}}$.
Then \begin{align}\nonumber
\sum_{s\sim t}y_{st}&=\sum_{s\sim t}\frac{v_{s}-v_{t}}{\Omega_{st}}=\sum_{s\sim t}\widetilde{a}_{st}(v_{s}-v_{t})=v_{s}\sum_{s\sim t}\widetilde{a}_{st}-\sum_{s\sim t}\widetilde{a}_{st}v_{t}\\ \label{qs}
&=\widetilde{d}_{s}v_{s}-(\widetilde{A}\textbf{v})_{s}=((\widetilde{D}-\widetilde{A})\textbf{v})_{s}=(\widetilde{L}\textbf{v})_{s}.
\end{align}
From Eqs. \eqref{lmt} and \eqref{qs}, we have
\begin{align}\label{sh}\widetilde{L}\textbf{v}=Y(\textbf{1}_{i}-\textbf{1}_{j}).\end{align}
By Theorem \ref{3.3} $(b)$, the general solution to Eq. \eqref{sh} is
 $$\textbf{v}=\widetilde{L}^{\dag} Y(\textbf{1}_{i}-\textbf{1}_{j})+(I-\widetilde{L}^{\dag}\widetilde{L})
\textbf{u},$$
where $\textbf{u}\in\mathbb{H}^{n}$ is arbitrary. Hence, the resistance distance between vertices $i$ and $j$ of $G^{\widetilde{w}}$ is\begin{align}\nonumber
R_{ij}(G^{\widetilde{w}})&=
\frac{v_{i}-v_{j}}{Y}=
\frac{1}{Y}(\textbf{1}_{i}-\textbf{1}_{j})^{\top}\textbf{v}\\ \label{bb}
&=\frac{1}{Y}(\textbf{1}_{i}-\textbf{1}_{j})^{\top}
\widetilde{L}^{\dag} Y(\textbf{1}_{i}-\textbf{1}_{j})+
\frac{1}{Y}(\textbf{1}_{i}-\textbf{1}_{j})^{\top}
(I-\widetilde{L}^{\dag}\widetilde{L})
\textbf{u}.\end{align}
From Eq. \eqref{npd}, we have $I-\widetilde{L}^{\dag}\widetilde{L}=\frac{1}{n}J.$
Then   the second term of Eq. \eqref{bb}  is $$\frac{1}{Y}(\textbf{1}_{i}-\textbf{1}_{j})^{\top}
(I-\widetilde{L}^{\dag}\widetilde{L})=
\frac{1}{Y}(\textbf{1}_{i}-\textbf{1}_{j})^{\top}(\frac{1}{n}J)=0.$$
Therefore,
\begin{align}\label{fa2}R_{ij}(G^{\widetilde{w}})
=(\textbf{1}_{i}-\textbf{1}_{j})^{\top}\widetilde{L}^{\dag} (\textbf{1}_{i}-\textbf{1}_{j})=(\widetilde{L}^{\dag})^{(i,j)}.\end{align}
Substituting Eq. \eqref{eq1} into Eq. \eqref{fa2}, we have
\begin{align}\nonumber R_{ij}(G^{\widetilde{w}})
=(L^{\dag})^{(i,j)}-
(L^{\dag}L_{1}L^{\dag})^{(i,j)}(\varepsilon
+\varepsilon^{*})
+2(L^{\dag}L_{1}L^{\dag}L_{1}L^{\dag})^{(i,j)}
\varepsilon\varepsilon^{*}.
\end{align}

$(c)$ From part $(a)$, we know that $\widetilde{L}^{\dag}$ exists.  It follows that
\begin{align}\nonumber
Kf(G^{\widetilde{w}})&=\frac{1}{2}\sum_{i,j=1}^{n}R_{ij}(G^{\widetilde{w}})
=\frac{1}{2}\sum_{i,j=1}^{n}
(\widetilde{L}^{\dag})^{(i,j)}\\ \nonumber
&=\frac{1}{2}\sum_{i,j=1}^{n}(\widetilde{L}^{\dag})_{ii}+
\frac{1}{2}\sum_{i,j=1}^{n}(\widetilde{L}^{\dag})_{jj}
-\frac{1}{2}\sum_{i,j=1}^{n}(\widetilde{L}^{\dag})_{ij}
-\frac{1}{2}\sum_{i,j=1}^{n}(\widetilde{L}^{\dag})_{ji}\\ \nonumber
&=n \textrm{tr}(\widetilde{L}^{\dag})-
\textbf{1}^{T}\widetilde{L}^{\dag}\textbf{1}\\
\label{jm}
&=n \textrm{tr}(\widetilde{L}^{\dag}).
\end{align}
Substituting Eq. \eqref{eq1} into Eq. \eqref{jm} yields
\begin{align}\nonumber Kf(G^{\widetilde{w}})
&=n\textrm{tr}(L^{\dag})-n\textrm{tr}(L^{\dag}L_{1}L^{\dag})
(\varepsilon+\varepsilon^{*})
+2n\textrm{tr}(L^{\dag}L_{1}L^{\dag}L_{1}L^{\dag})
\varepsilon\varepsilon^{*}\\ \nonumber
&=n\textrm{tr}(L^{\dag})
-n\textrm{tr}((L^{\dag})^{2}L_{1})
(\varepsilon+\varepsilon^{*})
+2n\textrm{tr}((L^{\dag})^{2}L_{1}L^{\dag}L_{1})
\varepsilon\varepsilon^{*}.\end{align}
\end{proof}

\subsection{Quadratic forms of Hessian matrices for resistance distance and Kirchhoff index of  positive-weighted graphs and their
 estimations}
In this section, for a  connected  positive-weighted graph $G^{w}$, by using Eq. \eqref{jiq2} we
 establish relations between the quadratic forms of Hessian matrices (associated with its resistance distance and Kirchhoff index) and the Moore-Penrose inverse of its Laplacian
matrix. We further give estimates for the eigenvalues of these Hessian matrices in terms of graph parameters. Moreover, we prove that the Kirchhoff index
 of a  positive-weighted graph with bounded edge weights is strongly convex with respect to its edge weight vector..

\begin{thm}\label{ciio} Let $G^{w}$ be a connected  positive-weighted graph with $n$ vertices as in Eq. \eqref{chaoduiou}, whose edge weight vector is $\mathbf{x}$. Let $L$ be the Laplacian matrix of $G^{w}$ and $L_{1}$ the real matrix as in Eq. \eqref{lapu1}.
 Then the following statements hold.

$(a)$ The quadratic form for the Hessian matrix  of $R_{ij}(\mathbf{x})$ is $$(\Delta \mathbf{x})^{\top} \nabla^{2} R_{ij}(\mathbf{x})\Delta \mathbf{\mathbf{x}}=2(L^{\dag}L_{1}L^{\dag}L_{1}L^{\dag})^{(i,j)},$$
where $i,j\in[n]$.

$(b)$  The quadratic form for the Hessian matrix  of $Kf_{G}(\mathbf{x})$ is
$$(\Delta \mathbf{x})^{\top} \nabla^{2} Kf_{G}(\mathbf{x})\Delta \mathbf{x}= 2n \mathrm{tr}((L^\dag)^{2} L_{1} L^\dag L_{1}).$$
\end{thm}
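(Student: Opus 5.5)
The plan is to combine the hyper-dual Hessian formula \eqref{jiq2} with the explicit expressions in Theorem \ref{5.1}. For fixed $i,j$, take $f=R_{ij}$, viewed as a function of the edge weight vector. Evaluating at $\widetilde{\mathbf{x}}=\mathbf{x}+\Delta\mathbf{x}(\varepsilon+\varepsilon^{*})$ gives exactly the resistance distance of the hyper-dual number weighted graph $G^{\widetilde{w}}$ with weights $\widetilde{x}_k=x_k+\Delta x_k(\varepsilon+\varepsilon^{*})$. Therefore
$$(\Delta\mathbf{x})^{\top}\nabla^{2}R_{ij}(\mathbf{x})\Delta\mathbf{x}=\mathcal{S}_{\varepsilon\varepsilon^{*}}[R_{ij}(\widetilde{\mathbf{x}})]=\mathcal{S}_{\varepsilon\varepsilon^{*}}[R_{ij}(G^{\widetilde{w}})].$$
Theorem \ref{5.1}(b) says the $\varepsilon\varepsilon^{*}$ coefficient is $2(L^{\dag}L_{1}L^{\dag}L_{1}L^{\dag})^{(i,j)}$, which gives (a). In the same way, Theorem \ref{5.1}(c) gives (b) with coefficient $2n\,\mathrm{tr}((L^{\dag})^{2}L_{1}L^{\dag}L_{1})$.

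The one point needing justification is that the function $R_{ij}(\cdot)$ extended to hyper-dual arguments agrees with the circuit-theoretic quantity $R_{ij}(G^{\widetilde{w}})$ computed in Theorem \ref{5.1}. Formula \eqref{jiq2} requires evaluating the real analytic function $f$ on hyper-dual input, that is, its truncated Taylor expansion.

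I would handle this via Bapat's formula $R_{ij}(\mathbf{x})=(\mathbf{1}_i-\mathbf{1}_j)^{\top}L(\mathbf{x})^{\dag}(\mathbf{1}_i-\mathbf{1}_j)$. Since $L(\mathbf{x})$ is linear in $\mathbf{x}$, we have $L(\widetilde{\mathbf{x}})=L+L_1(\varepsilon+\varepsilon^{*})$ with $L_1=\sum_k \Delta x_k\, L_{e_k}$, which matches Eq. \eqref{lapu1}.

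It then remains to check that the Taylor expansion of $L(\mathbf{x}+t\Delta\mathbf{x})^{\dag}$ to second order is $L^{\dag}-tL^{\dag}L_1L^{\dag}+t^2L^{\dag}L_1L^{\dag}L_1L^{\dag}+O(t^3)$. This holds because $L(\mathbf{x}+t\Delta\mathbf{x})^{\dag}=(L+tL_1+\tfrac1nJ)^{-1}-\tfrac1nJ$ for small $t$. The reason is that $L_1$ annihilates $\mathbf{1}$, so it commutes with the projection and keeps the rank at $n-1$. A Neumann series of $(L+\tfrac1nJ)^{-1}$ then gives the expansion, using $L^{\dag}J=0$ and $L_1J=0$.

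Substituting $t\mapsto\varepsilon+\varepsilon^{*}$, where $(\varepsilon+\varepsilon^{*})^2=2\varepsilon\varepsilon^{*}$ and all higher powers vanish, reproduces \eqref{eq1}. This confirms consistency and explains the factor $2$. I expect this consistency check to be the only real obstacle; given it, both parts follow by reading off the $\varepsilon\varepsilon^{*}$ coefficients in Theorem \ref{5.1}(b),(c).
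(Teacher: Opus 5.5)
Your proposal is correct and follows the paper's proof: evaluate $R_{ij}$ and $Kf_{G}$ at $\mathbf{x}+\Delta\mathbf{x}(\varepsilon+\varepsilon^{*})$, invoke Eq.~\eqref{jiq2}, and read off the $\varepsilon\varepsilon^{*}$ coefficients from Theorem~\ref{5.1}(b),(c). Your extra consistency check is also sound: it shows that the circuit-defined $R_{ij}(G^{\widetilde{w}})$ agrees with the hyper-dual extension of the real function $R_{ij}$, via the second-order expansion of $(L+tL_{1}+\frac{1}{n}J)^{-1}-\frac{1}{n}J$ and $(\varepsilon+\varepsilon^{*})^{2}=2\varepsilon\varepsilon^{*}$. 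This supplies a step the paper only asserts through its notation $R_{ij}(\widetilde{\mathbf{x}})$, and that expansion on its own already yields both Hessian quadratic forms directly.
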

\begin{proof}
 $(a)$ By Theorem \ref{5.1} $(b)$, we have
\begin{align}\nonumber R_{ij}(\widetilde{\mathbf{x}})&=R_{ij}(\textbf{x} +\Delta \textbf{x}(\varepsilon+\varepsilon^{*}))\\ \nonumber
&=(L^{\dag})^{(i,j)}-
(L^{\dag}L_{1}L^{\dag})^{(i,j)}(\varepsilon
+\varepsilon^{*})
+2(L^{\dag}L_{1}L^{\dag}L_{1}L^{\dag})^{(i,j)}
\varepsilon\varepsilon^{*}.
\end{align}
From Eq. \eqref{jiq2},  the quadratic form for the Hessian matrix  of $R_{ij}(\mathbf{x})$ is
\begin{align}\nonumber
(\Delta \textbf{x})^{\top}\nabla^{2}R_{ij}(\mathbf{x})\Delta \textbf{x}=\mathcal{S}_{\varepsilon\varepsilon^{*}}[R_{ij}(\textbf{x} +\Delta \textbf{x}(\varepsilon+\varepsilon^{*}))]=2(L^{\dag}L_{1}L^{\dag}L_{1}L^{\dag})^{(i,j)}.\end{align}

$(b)$ By Theorem \ref{5.1} $(c)$, we obtain
\begin{align*}
Kf_{G}(\widetilde{\textbf{x}})=& Kf_{G}(\mathbf{x} + \Delta \mathbf{x}(\varepsilon + \varepsilon^{*}))\\
=&n\textrm{tr}(L^{\dag})
-n\textrm{tr}((L^{\dag})^{2}L_{1})
(\varepsilon+\varepsilon^{*})
+2n\textrm{tr}((L^{\dag})^{2}L_{1}L^{\dag}L_{1})
\varepsilon\varepsilon^{*}.
\end{align*}
Again using Eq. \eqref{jiq2},
the quadratic form for the Hessian matrix  of $Kf_{G}(\textbf{x})$ is
\begin{align}\nonumber
(\Delta \textbf{x})^{\top}\nabla^2 Kf_{G}(\textbf{x})\Delta \textbf{x}=\mathcal{S}_{\varepsilon\varepsilon^{*}}[Kf(\textbf{x} +\Delta \textbf{x}(\varepsilon+\varepsilon^{*}))]= 2n \textrm{tr}((L^\dag)^{2} L_{1} L^\dag L_{1}).
\end{align}

\end{proof}
\begin{rmk}Ref. \cite{ghosh2008minimizing} used the matrix inverse to give the Hessian quadratic form of the Kirchhoff index $Kf(G^{w})$  of a connected  positive-weighted graph $G^{w}$. Since the Moore-Penrose inverse of the Laplacian matrix $L$ of $G^{w}$ is given by $L^{\dag}=(L+\frac{1}{n}J)^{-1}-\frac{1}{n}J$ \cite{bapat2004resistance},
 substituting this identity  into Theorem \ref{ciio}
$(b)$ in this paper directly leads to the result in \cite{ghosh2008minimizing}.
\end{rmk}
The second smallest eigenvalue of the Laplacian
matrix of a graph is called the algebraic
connectivity of this graph, which
has wide applications in investigating graph connectivity
and partitioning  \cite{chung1997spectral}. A graph is connected if and only if its algebraic connectivity is positive \cite{fiedler1973algebraic}. The largest eigenvalue of the Laplacian matrix plays an important role in the maximum cut problem \cite{mohar1990eigenvalues}.
Let $A\in \mathbb{R}^{n\times n}$
be positive semidefinite with
eigenvalues $\lambda_{1},\lambda_{2},\cdots,\lambda_{n}$. Then there exists an orthogonal matrix $P\in \mathbb{R}^{n\times n}$ such that $A=P\textrm{diag}(\lambda_{1},\cdots,\lambda_{n})P^{\top}$. Denote $A^{\frac{1}{2}}=P\textrm{diag}(\sqrt{\lambda_{1}},\cdots,
\sqrt{\lambda_{n}})P^{\top}.$

Let $L$ be the Laplacian matrix of a connected  positive-weighted graph $G^{w}$. The quantity $((L^{2})^{\dag})^{(i,j)}$ is the biharmonic distance of $G^{w}$\cite{lipman2010biharmonic}, denoted by $\widehat{R}_{ij}$. Subsequently, we give explicit  bounds on the eigenvalues of Hessian matrices for the resistance distance and Kirchhoff index of $G^{w}$, where the bounds are
 expressed in terms of the largest eigenvalue of the Laplacian matrix, algebraic connectivity, biharmonic distance of $G^{w}$, and the maximum degree of the underlying graph $G$.

\begin{thm}\label{thm4}
Let $G^{w}$ be a connected  positive-weighted graph with $n$ vertices as in \eqref{chaoduiou}, whose edge weight vector is $\mathbf{x}$. Let $\lambda_{1}$ and $\lambda_{n-1}$ be the largest eigenvalue of the Laplacian matrix $L$ and  algebraic connectivity  of $G^{w}$, respectively. Let $d_{max}$ be the maximum degree of the graph $G$. Then

$(a)$ The eigenvalue $\mu(\nabla^{2}R_{ij}(\mathbf{x}))$ of the Hessian matrix $\nabla^{2}R_{ij}(\mathbf{x})$ $($associated with  the resistance distance of $G^{w}$$)$ satisfies
\begin{align}
\nonumber \mu(\nabla^{2}R_{ij}(\mathbf{x}))\leq \frac{4(d_{max}+1)}{\lambda_{n-1}}\widehat{R}_{ij},\end{align}
where $\widehat{R}_{ij}$ is the biharmonic distance of $G^{w}$, $i,j\in[n].$

$(b)$  The eigenvalue $\mu(\nabla^{2}Kf_{G}(\mathbf{x}))$ of the Hessian matrix $\nabla^{2}Kf_{G}(\mathbf{x})$ $($associated with the Kirchhoff index  of $G^{w}$$)$ satisfies
\begin{align}
\nonumber \frac{4n}{\lambda_{1}^{3}}\leq \mu(\nabla^{2}Kf_{G}(\mathbf{x}))\leq \frac{4n(d_{max}+1)}{\lambda_{n-1}^{3}}.\end{align}
\end{thm}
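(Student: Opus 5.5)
The plan is to turn each Hessian quadratic form from Theorem \ref{ciio} into a Rayleigh-quotient estimate, and to bound it by $\|\Delta\mathbf{x}\|^{2}$ times a graph constant. Since $\nabla^{2}R_{ij}(\mathbf{x})$ and $\nabla^{2}Kf_{G}(\mathbf{x})$ are real symmetric, every eigenvalue lies between $\min$ and $\max$ of $(\Delta\mathbf{x})^{\top}\nabla^{2}f\,\Delta\mathbf{x}/\|\Delta\mathbf{x}\|^{2}$. So it suffices to bound the expressions of Theorem \ref{ciio} in terms of $\|\Delta\mathbf{x}\|^{2}$, with $\Delta\mathbf{x}$ arbitrary and $L_{1}$ built linearly from it.

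\textbf{Spectral facts.} I will use the decomposition \eqref{pfj1}. It gives the Loewner bounds
\[
\tfrac{1}{\lambda_{1}}P\preceq L^{\dag}\preceq \tfrac{1}{\lambda_{n-1}}P\preceq \tfrac{1}{\lambda_{n-1}}I,\qquad P=I-\tfrac1nJ .
\]
Since $L_{1}$ is symmetric with zero row sums, $L_{1}J=JL_{1}=0$, and hence $PL_{1}=L_{1}P=L_{1}$.

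\textbf{Frobenius norm of $L_{1}$.} Write $N(i)$ for the set of edges incident to vertex $i$ in $G$. The off-diagonal entries of $L_{1}$ contribute exactly $2\sum_{e}(\Delta x_{e})^{2}=2\|\Delta\mathbf{x}\|^{2}$. By Cauchy--Schwarz, $(\Delta d_{i})^{2}=\bigl(\sum_{e\in N(i)}\Delta x_{e}\bigr)^{2}\le d_{max}\sum_{e\in N(i)}(\Delta x_{e})^{2}$, and summing over $i$ counts each edge twice. Together these give
\[
2\|\Delta\mathbf{x}\|^{2}\le \|L_{1}\|_{F}^{2}\le 2(d_{max}+1)\|\Delta\mathbf{x}\|^{2}.
\]
This two-sided estimate is where $d_{max}+1$ enters, and it is the step I expect to need the most care. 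I will use $\|L_{1}\|_{2}\le\|L_{1}\|_{F}$ rather than a Gershgorin-type bound, because Gershgorin gives the wrong constant.

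\textbf{Part (a).} Put $\mathbf{u}=\mathbf{1}_{i}-\mathbf{1}_{j}$ and $\mathbf{v}=L^{\dag}\mathbf{u}$. By Theorem \ref{ciio}(a) the form equals $2\mathbf{v}^{\top}L_{1}L^{\dag}L_{1}\mathbf{v}$. Using $L^{\dag}\preceq\lambda_{n-1}^{-1}I$, this is at most $\frac{2}{\lambda_{n-1}}\|L_{1}\mathbf{v}\|^{2}\le\frac{2}{\lambda_{n-1}}\|L_{1}\|_{F}^{2}\|\mathbf{v}\|^{2}$. Here
\[
\|\mathbf{v}\|^{2}=\mathbf{u}^{\top}(L^{\dag})^{2}\mathbf{u}=((L^{2})^{\dag})^{(i,j)}=\widehat{R}_{ij},
\]
since $(L^{\dag})^{2}=(L^{2})^{\dag}$. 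Inserting the Frobenius bound gives $\frac{4(d_{max}+1)}{\lambda_{n-1}}\widehat{R}_{ij}\|\Delta\mathbf{x}\|^{2}$, which proves (a).

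\textbf{Part (b).} By cyclicity, $\mathrm{tr}((L^{\dag})^{2}L_{1}L^{\dag}L_{1})=\mathrm{tr}\bigl((L^{\dag})^{2}\,S\bigr)$ with $S=L_{1}L^{\dag}L_{1}\succeq0$. Each time, I replace one factor of $L^{\dag}$ sandwiched against a PSD matrix by its Loewner bound.

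For the upper bound, the three factors of $L^{\dag}$ each contribute $\lambda_{n-1}^{-1}$. This gives
\[
\mathrm{tr}(\cdot)\le\lambda_{n-1}^{-2}\mathrm{tr}(S)=\lambda_{n-1}^{-2}\mathrm{tr}(L^{\dag}L_{1}^{2})\le\lambda_{n-1}^{-3}\|L_{1}\|_{F}^{2}.
\]

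For the lower bound, $(L^{\dag})^{2}\succeq\lambda_{1}^{-2}P$, so $\mathrm{tr}(\cdot)\ge\lambda_{1}^{-2}\mathrm{tr}(PS)=\lambda_{1}^{-2}\mathrm{tr}(L^{\dag}L_{1}^{2})$. Then $L^{\dag}\succeq\lambda_{1}^{-1}P$ and $PL_{1}=L_{1}$ give $\mathrm{tr}(L^{\dag}L_{1}^{2})\ge\lambda_{1}^{-1}\|L_{1}\|_{F}^{2}$.

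Multiplying by $2n$ and using the two-sided Frobenius estimate yields
\[
\frac{4n}{\lambda_{1}^{3}}\|\Delta\mathbf{x}\|^{2}\le(\Delta\mathbf{x})^{\top}\nabla^{2}Kf_{G}(\mathbf{x})\Delta\mathbf{x}\le\frac{4n(d_{max}+1)}{\lambda_{n-1}^{3}}\|\Delta\mathbf{x}\|^{2}.
\]
Together with the Rayleigh-quotient characterization, this proves (b).
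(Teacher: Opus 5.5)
Your proof is correct and follows the paper's route: the quadratic forms of Theorem~\ref{ciio}, the two-sided estimate $2\|\Delta\mathbf{x}\|_2^2\le\|L_1\|_F^2\le 2(d_{max}+1)\|\Delta\mathbf{x}\|_2^2$, extreme-eigenvalue bounds on each factor of $L^{\dag}$ (using that $L_1$ annihilates $\mathbf{1}$), and the Rayleigh quotient. The only difference is in how you bound the $L^{\dag}$ factors: you use the Loewner bounds $\lambda_1^{-1}P\preceq L^{\dag}\preceq\lambda_{n-1}^{-1}P$ with trace monotonicity against PSD matrices, where the paper uses $\|AB\|_F\le\|A\|_2\|B\|_F$. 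This makes your lower bound in (b) cleaner; the paper's chain there has a slip, writing $\lambda_{n-1}^{3}$ where $\lambda_{1}^{3}$ is meant.
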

\begin{proof}
 We first prove the following inequality\begin{align}\label{promi}2\|\Delta \textbf{x}\|^{2}_{2}\leq\|L_{1}\|_{F}^{2}\leq2(d_{max}+1)\|\Delta \textbf{x}\|^{2}_{2},\end{align}
where $\|\Delta \textbf{x}\|_{2}$ is the Euclidean norm  of the vector $\Delta \textbf{x}$, and $\|L_{1}\|_{F}$ is the Frobenius norm of the matrix $L_{1}$.
From Eq. \eqref{lapu1}, we obtain\begin{equation}\label{xuy1}
    \begin{aligned}
\|L_{1}\|_{F}^{2}=\textrm{tr}(L^{2}_{1})=\textrm{tr}((D_{1}-A_{1})^{2})
=\textrm{tr}(D_{1}^{2})+
\textrm{tr}(A_{1}^{2})-2\textrm{tr}(A_{1}D_{1}).
\end{aligned}
\end{equation}
Since all the diagonal elements of $A_{1}$ are  0 and $D_{1}$ is a diagonal matrix, it follows that $\textrm{tr}(A_{1}D_{1})=0.$ From Eq. \eqref{xuy1}, we have
\begin{align}\label{xu1}
\|L_{1}\|_{F}^{2}=\textrm{tr}(D_{1}^{2})+
\textrm{tr}(A_{1}^{2}).
\end{align}
By using Eqs. \eqref{chaoduiou} and \eqref{D1L}, it follows that \begin{align}\label{dd11}
 \textrm{tr}(A_{1}^{2})&=\sum_{i,j=1}^{n}(\Delta a_{ij})^{2}
=2\sum_{\{i,j\}\in E}\left(\Delta a_{ij}\right)^{2}=2\|\Delta \textbf{x}\|^{2}_{2},\\ \nonumber
\textrm{tr}(D_{1}^{2})&=\sum_{i}^{n} \left(\Delta d_{i}\right)^{2}= \sum_{i}^{n}\left(\sum_{i\sim j}^{n} \Delta a_{ij}\right)^{2}
.\end{align}
For a positive integer $s$  and real numbers $a_{1},a_{2},\cdots,a_{s}$, it is obvious that $(a_{1}+a_{2}+\cdots+a_{s})^{2}\leq s(a_{1}^{2}+a_{2}^{2}+\cdots+a_{s}^{2})$. Hence,
\begin{align}\nonumber \textrm{tr}(D_{1}^{2})&= \sum_{i}^{n}\left(\sum_{i\sim j}^{n} \Delta a_{ij}\right)^{2}\leq
\sum_{i}^{n}d_{i}\sum_{j=1}^{n} |\Delta a_{ij}|^{2}\\ \label{K2}
&\leq d_{max}\sum_{i,j=1}^{n} \Delta a_{ij}^{2}=2d_{max}\|\Delta \textbf{x}\|^{2}_{2},
\end{align}
where $d_{i}$ is the degree of the vertex $i$ of $G$, $i\in[n].$
Combining Eqs. \eqref{xu1}, \eqref{dd11} and \eqref{K2} gives $$\|L_{1}\|_{F}^{2}\leq2(d_{max}+1)\|\Delta \textbf{x}\|^{2}_{2}.$$ From \eqref{xu1}, it is obvious that $\|L_{1}\|_{F}^{2}\geq \textrm{tr}(A_{1}^{2})=2\|\Delta \textbf{x}\|^{2}_{2}.$ Thus, Ineq. \eqref{promi} is proved.

$(a)$
Let $\textbf{b}=\textbf{1}_{i}-\textbf{1}_{j}$.
 It follows from Theorem~\ref{ciio} $(a)$ that
\begin{align}\nonumber (\Delta \textbf{x})^{\top}\nabla^{2}R_{ij}(\mathbf{x})\Delta \textbf{x}&=2(L^{\dag}L_{1}L^{\dag}L_{1}L^{\dag})^{(i,j)}
=2\textbf{b}^{\top}L^{\dag}L_{1}L^{\dag}L_{1}L^{\dag}\textbf{b}\\ \nonumber
&=2\|(L^{\dag})^{\frac{1}{2}}L_{1}L^{\dag}\textbf{b}\|^{2}_{2}
\leq2\|(L^{\dag})^{\frac{1}{2}}\|_{2}^{2} \|L_{1}\|_{2}^{2} \|L^{\dag}\textbf{b}\|_{2}^{2}\\ \nonumber
&\leq 2\|(L^{\dag})^{\frac{1}{2}}\|_{2}^{2} \|L_{1}\|_{F}^{2} \|L^{\dag}\textbf{b}\|_{2}^{2},
\end{align} where $\|(L^{\dag})^{\frac{1}{2}}\|_{2}$ is the spectral norm of $(L^{\dag})^{\frac{1}{2}}$.

From the spectral decomposition Eq. \eqref{pfj1} of $L^{\dag}$, we obtain  $\|(L^{\dag})^{\frac{1}{2}}\|_{2}^{2}=\frac{1}{\lambda_{n-1}}.$ Then
\begin{align}\nonumber
(\Delta \textbf{x})^{\top}\nabla^{2}R_{ij}(\mathbf{x})\Delta \textbf{x}\leq 2\frac{ \|L_{1}\|_{F}^{2} \|L^{\dag}\textbf{b}\|_{2}^{2}}{\lambda_{n-1}}.
\end{align}
Again by  Ineq.  \eqref{promi}, we have \begin{align}\label{end1} \frac{(\Delta \textbf{x})^{\top}\nabla^{2}R_{ij}(\mathbf{x})\Delta \textbf{x}}{\|\Delta \textbf{x}\|^{2}}\leq\frac{4(d_{max}+1)}{\lambda_{n-1}}
\|L^{\dag}\textbf{b}\|_{2}^{2}=\frac{4(d_{max}+1)}{\lambda_{n-1}}((L^{2})^{\dag})^{(i,j)}.\end{align}
Therefore, $$\mu(\nabla^{2}R_{ij}(\mathbf{x}))\leq \frac{4(d_{max}+1)}{\lambda_{n-1}}((L^{2})^{\dag})^{(i,j)}=\frac{4(d_{max}+1)}{\lambda_{n-1}}\widehat{R}_{ij}.$$

$(b)$ For $A\in \mathbb{R}^{m\times n}$ and $B\in \mathbb{R}^{n\times p}$, the norm inequality \cite[p.~364]{horn2012matrix1}\begin{align}\label{bds}\|AB\|_{F}\leq \|A\|_{2}\|B\|_{F}.\end{align} holds.
From Theorem~\ref{ciio} $(b)$ and  Ineq.  \eqref{bds}, we have\begin{align}\label{K4} (\Delta \mathbf{x})^{\top} \nabla^{2} Kf_{G}(\Delta \mathbf{x}) \Delta \mathbf{\mathbf{x}}&=2n\textrm{tr} ( (L^\dag)^{2} L_{1} L^\dag L_{1})=2n\|(L^{\dag})^{\frac{1}{2}}L_{1}L^{\dag}\|^{2}_{F}\\ \nonumber
&\leq 2n\|(L^{\dag})^{\frac{1}{2}}\|_{2}^{2}\|L_{1}L^{\dag}\|_{F}^{2}
\leq 2n\|(L^{\dag})^{\frac{1}{2}}\|_{2}^{2}\|L_{1}\|_{F}^{2}\|L^{\dag}\|_{2}^{2}.
\end{align}
By Eq. \eqref{pfj1}, we have $\|(L^{\dag})^{\frac{1}{2}}\|_{2}^{2}=\frac{1}{\lambda_{n-1}}$
and $\|L^{\dag}\|_{2}^{2}=\frac{1}{\lambda_{n-1}^{2}}.$
Then \begin{align}\nonumber
(\Delta \mathbf{x})^{\top} \nabla^{2} Kf_{G}(\Delta \mathbf{x}) \Delta \mathbf{\mathbf{x}}\leq2n\frac{\|L_{1}\|_{F}^{2}}
{\lambda_{n-1}^{3}}.
\end{align}
Again by  Ineq.  \eqref{promi}, we get $$\frac{(\Delta \textbf{x})^{\top} \nabla^{2} Kf_{G}(\textbf{x})\Delta \textbf{x}}{\|\Delta \textbf{x} \|^{2}_{2}}\leq\frac{4n(d_{max}+1)}{\lambda_{n-1}^{3}}.$$
 Thus, $$\mu(\nabla^{2}Kf_{G}(\mathbf{x}))\leq \frac{4n(d_{max}+1)}{\lambda_{n-1}^3}.$$

Next we derive a lower bound on $\mu(\nabla^{2}Kf_{G}(\mathbf{x}))$. Applying  Ineq.  \eqref{bds} gives
$\|L^{\frac{1}{2}}(L^{\dag})^{\frac{1}{2}}L_{1}L^{\dag}\|_{F}^{2}\leq \|L^{\frac{1}{2}}\|_{2}^{2}
\|(L^{\dag})^{\frac{1}{2}}L_{1}L^{\dag}\|^{2}_{F}.$
Then
$$\|(L^{\dag})^{\frac{1}{2}}L_{1}L^{\dag}\|^{2}_{F}\geq
\frac{\|L^{\frac{1}{2}}(L^{\dag})^{\frac{1}{2}}L_{1}L^{\dag}\|_{F}^{2}}{\|L^{\frac{1}{2}}\|_{2}^{2}}.$$
 Combining this with Eq. \eqref{K4}, we have \begin{align}\nonumber
&(\Delta \mathbf{x})^{\top} \nabla^{2} Kf_{G}(\Delta \mathbf{x}) \Delta \mathbf{\mathbf{x}}
=2n\|(L^{\dag})^{\frac{1}{2}}L_{1}L^{\dag}\|^{2}_{F}\geq
2n\frac{\|L^{\frac{1}{2}}(L^{\dag})^{\frac{1}{2}}L_{1}L^{\dag}\|_{F}^{2}}{\|L^{\frac{1}{2}}\|_{2}^{2}}.
\end{align}
From Eqs. \eqref{pfj} and \eqref{pfj1}, we have $L^{\frac{1}{2}}(L^{\dag})^{\frac{1}{2}}=(\sum_{i=1}^{n-1}\sqrt{\lambda_{i}}p_{i}p_{i}^{\top})
(\sum_{i=1}^{n-1}\frac{1}{\sqrt{\lambda_{i}}}p_{i}p_{i}^{\top})=\sum_{i=1}^{n-1}p_{i}p_{i}^{\top}=p_{n}p_{n}^{\top}
=I-\frac{1}{n}J.$ It follows that
\begin{align}\label{kfg3}(\Delta \mathbf{x})^{\top} \nabla^{2} Kf_{G}(\Delta \mathbf{x}) \Delta \mathbf{\mathbf{x}}
\geq 2n\frac{\|(I-\frac{1}{n}J)L_{1}L^{\dag}\|_{F}^{2}}{\|L^{\frac{1}{2}}\|_{2}^{2}}
=2n\frac{\|L_{1}L^{\dag}\|_{F}^{2}}{\|L^{\frac{1}{2}}\|_{2}^{2}}.\end{align}
Again by  Ineq.  \eqref{bds}, we get
$\|L_{1}L^{\dag}L\|_{F}^{2}\leq \|L_{1}L^{\dag}\|_{F}^{2}\|L\|_{2}^{2}.$ Then
\begin{align}\label{zuo1}\|L_{1}L^{\dag}\|_{F}^{2}\geq \frac{\|L_{1}L^{\dag}L\|_{F}^{2}}{\|L\|_{2}^{2}}.\end{align}
Combining  Ineqs.  \eqref{kfg3} with \eqref{zuo1}, we obtain
$$(\Delta \mathbf{x})^{\top} \nabla^{2} Kf_{G}(\Delta \mathbf{x}) \Delta \mathbf{\mathbf{x}}
\geq2n\frac{\|L_{1}L^{\dag}L\|_{F}^{2}}{\|L^{\frac{1}{2}}\|_{2}^{2}\|L\|_{2}^{2}}
=2n\frac{\|L_{1}(I-\frac{1}{n}J)\|_{F}^{2}}{\lambda_{n-1}^{3}}
=2n\frac{\|L_{1}\|_{F}^{2}}{\lambda_{n-1}^{3}}.$$
Again by  Ineq.  \eqref{promi}, we have $$\frac{(\Delta \textbf{x})^{\top} \nabla^{2} Kf_{G}(\textbf{x})\Delta \textbf{x}}{\|\Delta \textbf{x} \|_{2}^{2}}\geq\frac{4n}{\lambda_{1}^{3}},$$
then $$\mu(\nabla^{2}Kf_{G}(\mathbf{x}))\geq \frac{4n}{\lambda_{1}^{3}}.$$
This completes the proof of part $(b)$ of Theorem~\ref{thm4}.

\end{proof}
Since  $\|L^{\dag}\textbf{b}\|_{2}^{2}\leq \|L^{\dag}\|_{2}^{2}\|\|\textbf{b}\|_{2}^{2}
  =\frac{2}{\lambda_{n-1}^{2}}$  in Ineq. \eqref{end1},  we obtain another upper bound for Theorem \ref{thm4} $(a)$.
\begin{cor}\label{cor1}
  For a connected  positive-weighted graph $G^{w}$ with $n$ vertices, the eigenvalue $\mu(\nabla^{2}R_{ij}(\mathbf{x}))$ of the Hessian matrix $\nabla^{2}R_{ij}(\mathbf{x})$ $($associated with  the resistance distance of $G^{w}$$)$ satisfies
$$\mu(\nabla^{2}R_{ij}(\mathbf{x}))\leq \frac{8(d_{max}+1)}{\lambda_{n-1}^3},$$
where  $i,j\in[n]$.
\end{cor}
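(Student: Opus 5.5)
The plan is to reuse the chain of inequalities already established in the proof of Theorem \ref{thm4} $(a)$, stopping just before the biharmonic distance is introduced, and then bound the remaining vector norm by a spectral norm.

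First, take the quadratic form from Theorem \ref{ciio} $(a)$. With $\mathbf{b}=\mathbf{1}_{i}-\mathbf{1}_{j}$, write
$$(\Delta\mathbf{x})^{\top}\nabla^{2}R_{ij}(\mathbf{x})\Delta\mathbf{x}=2\|(L^{\dag})^{\frac12}L_{1}L^{\dag}\mathbf{b}\|_{2}^{2}.$$
As in that proof, submultiplicativity of the spectral norm, the inequality $\|L_{1}\|_{2}\le\|L_{1}\|_{F}$, and Ineq. \eqref{promi} together give the intermediate estimate \eqref{end1}:
$$(\Delta\mathbf{x})^{\top}\nabla^{2}R_{ij}(\mathbf{x})\Delta\mathbf{x}\le \frac{4(d_{max}+1)}{\lambda_{n-1}}\|L^{\dag}\mathbf{b}\|_{2}^{2}\,\|\Delta\mathbf{x}\|_{2}^{2}.$$

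Second, instead of rewriting $\|L^{\dag}\mathbf{b}\|_{2}^{2}$ as $\widehat{R}_{ij}$, bound it directly. We have $\|L^{\dag}\mathbf{b}\|_{2}^{2}\le\|L^{\dag}\|_{2}^{2}\|\mathbf{b}\|_{2}^{2}$. The spectral decomposition \eqref{pfj1} gives $\|L^{\dag}\|_{2}=1/\lambda_{n-1}$, and $\|\mathbf{b}\|_{2}^{2}=2$ when $i\neq j$. The case $i=j$ is trivial, since then $R_{ii}\equiv 0$ and the Hessian vanishes. Substituting yields
$$\frac{(\Delta\mathbf{x})^{\top}\nabla^{2}R_{ij}(\mathbf{x})\Delta\mathbf{x}}{\|\Delta\mathbf{x}\|_{2}^{2}}\le\frac{8(d_{max}+1)}{\lambda_{n-1}^{3}}.$$

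Finally, the Hessian is symmetric, so each of its eigenvalues is a Rayleigh quotient at a corresponding eigenvector. Taking $\Delta\mathbf{x}$ to be such an eigenvector gives the claimed bound on $\mu(\nabla^{2}R_{ij}(\mathbf{x}))$.

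No step is a genuine obstacle; the argument only collects earlier estimates. The one point needing care is justifying $\|L^{\dag}\|_{2}=1/\lambda_{n-1}$, which holds because the nonzero eigenvalues of $L^{\dag}$ are exactly $1/\lambda_{1},\dots,1/\lambda_{n-1}$, the largest of which is $1/\lambda_{n-1}$.
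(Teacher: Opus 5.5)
Your proposal is correct and follows the paper's own argument: the paper obtains this corollary by taking the intermediate estimate \eqref{end1} from the proof of Theorem~\ref{thm4}~$(a)$ and bounding $\|L^{\dag}\mathbf{b}\|_{2}^{2}\le\|L^{\dag}\|_{2}^{2}\|\mathbf{b}\|_{2}^{2}=2/\lambda_{n-1}^{2}$. Your separate handling of $i=j$ and the explicit Rayleigh-quotient step are extra care that the paper leaves implicit.
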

 Theorem \ref{thm4} and Corollary \ref{cor1} give  bounds on the eigenvalues of the Hessian matrix of the resistance distance and the Kirchhoff index of a  positive-weighted graph in terms of graph parameters. Two examples are given below to verify these bounds.

\begin{example}\label{rem3.6}
$(1)$ Let $K_{3}$ be a complete graph with the vertex set $V=\{1,2,3\}$ and edge set $E=\{e_{1}=\{1,2\},e_{2}=\{2,3\},e_{3}=\{1,3\}\}$.
Let $K_{3}^{w}=(V,E,w)$ be a positive-weighted graph with $K_{3}$ as its underlying graph, where  $w(e_{i})=x_{i}$ is the weight of edge $e_{i}$, $i=1,2,3.$ By using the series and parallel manipulations, the resistance distance between vertices $1$ and $2$ of
$K_{3}^{w}$ is
$$R_{12}(K_{3}^{w})=
\frac{x_{2}+x_{3}}{x_{1}x_{2}+x_{2}x_{3}+x_{1}x_{3}}=R_{12}(\mathbf{x}).$$
The second order partial derivatives of $R_{12}(\mathbf{x})$ are:
\begin{align}\nonumber \frac{\partial^{2} R_{12}(\mathbf{x})}{\partial x_{1}\partial x_{2}}&=
\frac{2x_{3}^{2}(x_{2}+x_{3})}
{D^{3}},\ \ \frac{\partial^{2} R_{12}(\mathbf{x})}{\partial x_{1}\partial x_{3}}=\frac{2x_{2}^{2}(x_{2}+x_{3})}
{D^{3}},\ \
\frac{\partial^{2} R_{12}(\mathbf{x})}{\partial x_{2}\partial x_{3}}=-\frac{2x_{1}x_{2}x_{3}}
{D^{3}},\\ \nonumber
\frac{\partial^{2} R_{12}(\mathbf{x})}{\partial x_{1}^{2}}&=
\frac{2(x_{2}+x_{3})^{3}}
{D^{3}},\ \ \frac{\partial^{2} R_{12}(\mathbf{x})}{\partial x_{2}^{2}}=
\frac{2x_{3}^{2}(x_{1}+x_{2})}
{D^{3}},\ \ \frac{\partial^{2} R_{12}(\mathbf{x})}{\partial x_{3}^{2}}=
\frac{2x_{2}^{2}(x_{1}+x_{2})}
{D^{3}},
\end{align}
where $D=x_{1}x_{2}+x_{2}x_{3}+x_{1}x_{3}$. Setting $x_{1}=x_{2}=x_{3}=1$, the Hessian matrix
$\nabla^{2}R_{12}(\mathbf{x})$ becomes
$$\nabla^{2}R_{12}(\mathbf{1})=\left[\begin{array}{ccc}
0.5926  &  0.1481  &  0.1481\\
    0.1481 &   0.1481 &  -0.0741\\
    0.1481 &  -0.0741  &  0.1481
    \end{array}
\right].$$
The largest eigenvalue of the above Hessian matrix is
0.6667.
Similarly, the largest eigenvalues of $\nabla^{2}R_{23}(\mathbf{1})$ and
$\nabla^{2}R_{13}(\mathbf{1})$ are both
0.6667.

It is easy to see that the largest eigenvalue and the algebraic connectivity of the Laplacian matrix of $K_{3}$ are both $3$. From Corollary \ref{cor1}, the upper bound represented by graph parameters for the eigenvalues of the Hessian matrix of the resistance distance of $K_3$ is calculated as  $0.8888$.

$(2)$ Similar to $(1)$ above, the Hessian matrix of the Kirchhoff index of $K_{4}$ is
$$ \nabla^{2}Kf_{K_{4}}(\textbf{1})=\left[\begin{array}{cccccc}
    0.5000 &   0.1250  &  0.1250  &  0.1250 &   0.1250 &   0.0000\\
    0.1250  &  0.5000  &  0.1250 &   0.1250  &  0.0000 &   0.1250\\
    0.1250  &  0.1250 &   0.5000   & 0.0000 &   0.1250  &  0.1250\\
    0.1250  &  0.1250   & 0.0000  &  0.5000 &   0.1250 &   0.1250\\
    0.1250  &  0.0000  &  0.1250  &  0.1250 &   0.5000 &   0.1250\\
    0.0000  &  0.1250   & 0.1250  &  0.1250 &   0.1250 & 0.5000\end{array}
\right].$$
The smallest and largest eigenvalues of $\nabla^{2}Kf_{K_4}(\mathbf{1})$ are computed to be $\frac{1}{4}$ and 1, respectively.

It is easy to see that the largest eigenvalue and the algebraic connectivity of the Laplacian matrix of the complete graph $K_{4}$ are both $4$. From Theorem \ref{thm4} $(b)$, the lower bound and upper bound for the eigenvalues of the Hessian matrix of the Kirchhoff index of $K_{4}$ are $\frac{1}{4}$ and $1$, respectively.

\end{example}

It is well known that a function $f(\textbf{x})$ is convex if and only if its Hessian matrix is positive semidefinite. If the Hessian matrix of
$f(\textbf{x})$ is positive definite, then $f(\textbf{x})$ is strictly convex. Moreover, if there exists a constant $\alpha>0$ such that the smallest eigenvalue of the Hessian matrix of  $f(\textbf{x})$ is greater than or equal to $\alpha$, then  $f(\textbf{x})$ is strongly convex \cite{boyd2004convex1}.

Ghosh, Boyd and Saberi proved that the resistance distance and Kirchhoff index of a connected  positive-weighted graph $G^{w}$ are strictly convex functions on its  edge weight vector $\mathbf{x}$ \cite{ghosh2008minimizing}. In the  following, we show that for a  positive-weighted graph $G^{w}$ with bounded edge weights, the Kirchhoff index  $Kf_{G}(\textbf{x})$  is strongly convex on its edge weight $\textbf{x}$.
\begin{thm}
The Kirchhoff index of a connected  positive-weighted graph with $n$ fixed vertices and bounded edge weights is strongly convex with respect to its edge weight vector.
\end{thm}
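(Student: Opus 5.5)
The plan is to use the lower bound in Theorem \ref{thm4} $(b)$: for every edge weight vector $\mathbf{x}$, every eigenvalue of the Hessian satisfies
$$\mu(\nabla^{2}Kf_{G}(\mathbf{x}))\geq \frac{4n}{\lambda_{1}^{3}},$$
where $\lambda_{1}=\lambda_{1}(\mathbf{x})$ is the largest Laplacian eigenvalue of $G^{w}$. Strong convexity on the domain requires a positive constant $\alpha$ independent of $\mathbf{x}$ with $\mu_{\min}(\nabla^{2}Kf_{G}(\mathbf{x}))\geq\alpha$. So the only remaining task is to bound $\lambda_{1}(\mathbf{x})$ from above uniformly over the admissible weights. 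Concretely, I read "bounded edge weights" as $0<x_{k}\leq M$ for all $k\in[m]$, with $M$ a fixed constant, and I fix the vertex number $n$ and the underlying graph $G$ (hence $m$ and $d_{max}$).

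First, I bound $\lambda_{1}$ through a quantity depending only on $n$ and $M$. One route uses the standard estimate $\lambda_{1}\leq \max_{s\sim t}(d_{s}+d_{t})\leq 2\max_{s} d_{s}$, where $d_{s}$ is the weighted degree. An alternative that avoids citing it is Gershgorin's theorem applied to $L=D-A$: each disc is centred at $d_{s}$ with radius $\sum_{t}a_{st}=d_{s}$, so $\lambda_{1}\leq 2\max_{s}d_{s}$. Since each weighted degree is a sum of at most $d_{max}\leq n-1$ weights, each at most $M$, we get $d_{s}\leq (n-1)M$. Hence
$$\lambda_{1}(\mathbf{x})\leq 2(n-1)M$$
for every admissible $\mathbf{x}$. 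Another option is $\lambda_{1}\leq \mathrm{tr}(L)=2\sum_{k}x_{k}\leq 2mM$; either bound suffices.

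Second, I substitute this into Theorem \ref{thm4} $(b)$. This gives
$$\mu(\nabla^{2}Kf_{G}(\mathbf{x}))\geq \frac{4n}{8(n-1)^{3}M^{3}}=\frac{n}{2(n-1)^{3}M^{3}}=:\alpha>0$$
uniformly in $\mathbf{x}$. Therefore $\nabla^{2}Kf_{G}(\mathbf{x})-\alpha I$ is positive semidefinite on the whole region of admissible weights. That region, the box $(0,M]^{m}$, is convex, so the criterion recalled from \cite{boyd2004convex1} applies and $Kf_{G}$ is strongly convex there.

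The main obstacle is conceptual rather than computational. Theorem \ref{thm4} $(b)$ gives only a pointwise bound, and the constant degenerates as the weights grow, since $\lambda_{1}\to\infty$. The boundedness hypothesis is exactly what prevents this, so I must make sure the constant depends only on $n$ and $M$ and not on $\mathbf{x}$. I also need the domain to be convex, so that the Hessian criterion is legitimate; only the upper bound on the weights matters, and positivity is needed only to keep $G^{w}$ connected.
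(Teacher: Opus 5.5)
Your proposal is correct and is essentially the paper's proof: the paper also bounds $\lambda_{1}\leq 2d_{max}^{w}\leq 2(n-1)M$ (it cites Anderson--Morley where you offer Gershgorin as an alternative) and substitutes this into the lower bound of Theorem~\ref{thm4}~$(b)$ to get the same constant $\alpha=\frac{n}{2(n-1)^{3}M^{3}}$. Your explicit check that the admissible region $(0,M]^{m}$ is convex, which the Hessian criterion for strong convexity requires, is a small addition that the paper leaves implicit.
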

\begin{proof}
It follows from \cite{anderson1985eigenvalues} that the largest eigenvalue $\lambda_{1}$  of the Laplacian matrix of a  positive-weighted graph $G^{w}$ satisfies
$\lambda_{1}\leq 2 d_{max}^{w}$, where $d_{max}^{w}$ is the maximum weighted degree of $G^{w}$.  Let  $w(e)\leq M$ for all $e\in E$, where $M>0$ is  a constant.  Then $$\lambda_{1}\leq 2d_{max}^{w}\leq 2(n-1)M.$$
Let $\alpha=\frac{n}{2(n-1)^{3}M^{3}}.$ By Theorem~\ref{thm4} $(b)$, the smallest eigenvalue $\mu_{min}(\nabla^{2}Kf_{G}(\textbf{x}))$ $($associated with the Hessian matrix of the Kirchhoff index of $G^{w}$$)$ satisfies
$$\mu_{min}(\nabla^{2}Kf_{G}(\textbf{x}))\geq \frac{4n}{\lambda_{1}^{3}}\geq \frac{n}{2(n-1)^{3}M^{3}}=\alpha
>0.$$  Hence, the Kirchhoff index of $G^{w}$  is strongly convex with respect to its edge weight vector $\textbf{x}$.
\end{proof}
\section*{Acknowledgement}
The research of the second author is partially
supported by the National Natural Science Foundation
 of China (No.12371344), the Natural Science Foundation
  for The Excellent Youth Scholars of the Heilongjiang Province (No.YQ2024A009) and the Fundamental Research Funds
  for the Central Universities (No.3072025YC2403),
  and the third author is partially supported by the
  National Natural Science Foundation of China
  (No.12071097).

\section*{References}
\bibliographystyle{unsrt}
\bibliography{scholar}

\begin{thebibliography}{10}

\bibitem{harary1969graph}
F.~Harary.
\newblock {\em Graph {T}heory}.
\newblock Addison-Wesley Publishing company, United States, 1969.

\bibitem{foster1949average}
R.M. Foster.
\newblock The average impedance of an electrical network.
\newblock {\em Contributions to Applied Mechanics {(Reissner Anniversary
  Volume)}}, pages 333--340, 1949.

\bibitem{chen2008resistance}
H.~Chen and F.~Zhang.
\newblock Resistance distance local rules.
\newblock {\em Journal of Mathematical Chemistry}, 44(2):405--417, 2008.

\bibitem{Klein1993Resistance}
D.J. Klein and M.~Randi\'{c}.
\newblock Resistance distance.
\newblock {\em Journal of Mathematical Chemistry}, 12:81--95, 1993.

\bibitem{bapat2004resistance}
R.B. Bapat.
\newblock Resistance matrix of a weighted graph.
\newblock {\em MATCH Communications in Mathematical and in Computer Chemistry},
  50:73--82, 2004.

\bibitem{cheng2022counting}
S.~Cheng, W.~Chen, and W.~Yan.
\newblock Counting spanning trees in almost complete multipartite graphs.
\newblock {\em Journal of Algebraic Combinatorics}, 56:773--783, 2022.

\bibitem{li2019enumeration}
T.~Li and W.~Yan.
\newblock Enumeration of spanning trees of 2-separable networks.
\newblock {\em Physica A: Statistical Mechanics and its Applications},
  536:120877, 2019.

\bibitem{lovasz1993random}
L.~Lov{\'a}sz.
\newblock Random walks on graphs: A survey.
\newblock {\em Combinatorics, Paul erdos is eighty}, 2:1--46, 1993.

\bibitem{cloninger2024random}
A.~Cloninger, G.~Mishne, A.~Oslandsbotn, S.J. Robertson, Z.~Wan, and Y.~Wang.
\newblock {Random walks, conductance, and resistance for the connection graph
  Laplacian}.
\newblock {\em SIAM Journal on Matrix Analysis and Applications},
  45(3):1541--1572, 2024.

\bibitem{zhang2019detecting}
T.~Zhang and C.~Bu.
\newblock Detecting community structure in complex networks via resistance
  distance.
\newblock {\em Physica A: Statistical Mechanics and its Applications},
  526:120782, 2019.

\bibitem{bonchev1994molecular}
D.~Bonchev, A.T. Balaban, X.~Liu, and D.J. Klein.
\newblock Molecular cyclicity and centricity of polycyclic graphs. {I}.
  {C}yclicity based on resistance distances or reciprocal distances.
\newblock {\em International journal of quantum chemistry}, 50(1):1--20, 1994.

\bibitem{yang2018designing}
C.~Yang, J.~Mao, X.~Qian, and P.~Wei.
\newblock Designing robust air transportation networks via minimizing total
  effective resistance.
\newblock {\em IEEE Transactions on Intelligent Transportation Systems},
  20(6):2353--2366, 2018.

\bibitem{ellens2011effective}
W.~Ellens, F.~Spieksma, P.~Van~Mieghem, A.~Jamakovic, and R.~Kooij.
\newblock Effective graph resistance.
\newblock {\em Linear algebra and its applications}, 435(10):2491--2506, 2011.

\bibitem{shannon1993concavity}
C.~Shannon and D~Hagelberger.
\newblock Concavity of resistance functions.
\newblock {\em Journal of Applied Physics}, 27:42--43, 1956.

\bibitem{ghosh2008minimizing}
A.~Ghosh, S.~Boyd, and A.~Saberi.
\newblock Minimizing effective resistance of a graph.
\newblock {\em SIAM review}, 50(1):37--66, 2008.

\bibitem{li2025resistance}
Y.~Li, L.~Sun, and C.~Bu.
\newblock The resistance distance of a dual number weighted graph.
\newblock {\em Discrete Applied Mathematics}, 375:154--165, 2025.

\bibitem{chen2021fast}
C.~Chen, S.~Reiz, C.~Yu, H.~Bungartz, and G.~Biros.
\newblock Fast approximation of the {G}auss--{N}ewton {H}essian matrix for the
  multilayer perceptron.
\newblock {\em SIAM Journal on Matrix Analysis and Applications},
  42(1):165--184, 2021.

\bibitem{yazawa2021eigenvalues}
A.~Yazawa.
\newblock The eigenvalues of {H}essian matrices of the complete and complete
  bipartite graphs.
\newblock {\em Journal of Algebraic Combinatorics}, 54(4):1137--1157, 2021.

\bibitem{pan2019principal}
Y.~Pan, Y.~Zhou, W.~Liu, and L.~Nie.
\newblock Principal component analysis on graph-{H}essian.
\newblock In {\em 2019 IEEE Symposium Series on Computational Intelligence
  (SSCI)}, pages 1494--1501. IEEE, 2019.

\bibitem{clifford1871preliminary}
M.A. Clifford.
\newblock Preliminary sketch of biquaternions.
\newblock {\em Proceedings of the London Mathematical Society}, 4(64):381--395,
  1873.

\bibitem{Geometry1010}
E.~Study.
\newblock {\em Geometry der Dynamen}.
\newblock Verlag Teubner, Leipzig, 1903.

\bibitem{fischer}
I.~Fischer.
\newblock {\em Dual-Number Methods in Kinematics, Statics and Dynamics}.
\newblock CRC Press, Boca Raton, 1998.

\bibitem{gu1987dual}
Y.~Gu and J.~Luh.
\newblock Dual-number transformation and its applications to robotics.
\newblock {\em IEEE Journal on Robotics and Automation}, 3(6):615--623, 1987.

\bibitem{qi2024eigenvalues}
L.~Qi and C.~Cui.
\newblock Eigenvalues of dual {H}ermitian matrices with application in
  formation control.
\newblock {\em SIAM Journal on Matrix Analysis and Applications},
  45(4):2135--2154, 2024.

\bibitem{wei2024singular}
T.~Wei, W.~Ding, and Y.~Wei.
\newblock Singular value decomposition of dual matrices and its application to
  traveling wave identification in the brain.
\newblock {\em SIAM Journal on Matrix Analysis and Applications},
  45(1):634--660, 2024.

\bibitem{qi2024dualww}
L.~Qi and C.~Cui.
\newblock Dual markov chain and dual number matrices with nonnegative standard
  parts.
\newblock {\em Communications on Applied Mathematics and Computation}, pages
  1--20, 2024.

\bibitem{fike2011development}
J.~Fike and J.~Alonso.
\newblock The development of hyper-dual numbers for exact second-derivative
  calculations.
\newblock In {\em 49th AIAA aerospace sciences meeting including the new
  horizons forum and aerospace exposition}, page 886, 2011.

\bibitem{fike2012automatic}
J.~Fike and J.~Alonso.
\newblock Automatic differentiation through the use of hyper-dual numbers for
  second derivatives.
\newblock In {\em Recent Advances in Algorithmic Differentiation}, pages
  163--173, 2012.

\bibitem{murai2022method}
D.~Murai, R.~Omote, and M.~Tanaka.
\newblock The method for solving topology optimization problems using
  hyper-dual numbers.
\newblock {\em Archive of Applied Mechanics}, 92(10):2813--2824, 2022.

\bibitem{cohen2016application}
A.~Cohen and M.~Shoham.
\newblock Application of hyper-dual numbers to multibody kinematics.
\newblock {\em Journal of Mechanisms and Robotics}, 8(1):011015, 2016.

\bibitem{tanaka2015highly}
M.~Tanaka, T.~Sasagawa, R.~Omote, M.~Fujikawa, D.~Balzani, and J.~Schr{\"o}der.
\newblock A highly accurate 1st-and 2nd-order differentiation scheme for
  hyperelastic material models based on hyper-dual numbers.
\newblock {\em Computer Methods in Applied Mechanics and Engineering},
  283:22--45, 2015.

\bibitem{xiao2024c}
Q.~Xiao and J.~Zhong.
\newblock Characterizations and properties of hyper-dual
  \textrm{M}oore-\textrm{P}enrose generalized inverse.
\newblock {\em AIMS Mathematics}, 9(12):35125--35150, 2024.

\bibitem{bapat2010graphs}
R.B. Bapat.
\newblock {\em Graphs and Matrices}.
\newblock Springer, London, 2010.

\bibitem{chung1997spectral}
F.~Chung.
\newblock {\em Spectral {G}raph {T}heory}.
\newblock American Mathematical Society, Providence, 1997.

\bibitem{fiedler1973algebraic}
M.~Fiedler.
\newblock Algebraic connectivity of graphs.
\newblock {\em Czechoslovak mathematical journal}, 23(2):298--305, 1973.

\bibitem{mohar1990eigenvalues}
B.~Mohar and S.~Poljak.
\newblock Eigenvalues and the max-cut problem.
\newblock {\em Czechoslovak Mathematical Journal}, 40(2):343--352, 1990.

\bibitem{lipman2010biharmonic}
Y.~Lipman, R.~Rustamov, and T.~Funkhouser.
\newblock Biharmonic distance.
\newblock {\em ACM Transactions on Graphics (TOG)}, 29(3):1--11, 2010.

\bibitem{horn2012matrix1}
R.~Horn and C.~Johnson.
\newblock {\em Matrix {A}nalysis}.
\newblock Cambridge university press, New York, 2013.

\bibitem{boyd2004convex1}
S.~Boyd and L.~Vandenberghe.
\newblock {\em Convex {O}ptimization}.
\newblock Cambridge university press, Cambridge, UK, 2004.

\bibitem{anderson1985eigenvalues}
W.N. Anderson and T.D. Morley.
\newblock Eigenvalues of the {L}aplacian of a graph.
\newblock {\em Linear and multilinear algebra}, 18(2):141--145, 1985.

\end{thebibliography}
\end{spacing}
\end{document}